\theoremstyle{plain}
\newtheorem{lema}{Lemma}[section]
\newtheorem{teo}[lema]{Theorem}
\newtheorem{coro}[lema]{Corollary}
\newtheorem{prop}[lema]{Proposition}
\theoremstyle{definition}
\newtheorem{ejem}[lema]{Example}
\newtheorem{rem}[lema]{Remark}
\theoremstyle{remark}
\newtheorem*{note}{Note}
\numberwithin{equation}{section}
\newcommand{\N}{\mathbb N}
\newcommand{\Z}{\mathbb Z}
\newcommand{\Q}{\mathbb Q}
\newcommand{\R}{\mathbb R}
\newcommand{\C}{\mathbb C}
\newcommand{\f}{\frac}
\newcommand{\tf}{\tfrac}
\newcommand{\sk}{\smallskip}
\newcommand{\msk}{\medskip}
\newcommand{\bsk}{\bigskip}
\title[Krawtchouk polynomials, binomial coefficients and Catalan numbers]{New identities for binary Krawtchouk polynomials, 
binomial coefficients and Catalan numbers}
\address{Ricardo A.\@ Podest\'a -- Centro de Investigaci\'on y Estudios en Matem\'atica de C\'ordoba (UNC - CONICET), FaMAF, (5000)
 C\'ordoba, Argentina.} 
\email{podesta@famaf.unc.edu.ar}
\author{Ricardo A.\@ Podest\'a}
\keywords{Binary Krawtchouk polynomials, characters of $p$-exterior representations, binomial coefficients, Catalan numbers}
\thanks{2010 {\it Mathematics Subject Classification.} Primary 33C47; \, Secondary 05A19, 05E15.}
\thanks{Partially supported by CONICET, FONCyT and SECyT-UNC}
\begin{document}
\bibliographystyle{plain}

\begin{abstract}
We obtain new combinatorial identities for integral values of binary \linebreak Krawtchouk polynomials $K^{2m}_p(x)$, $0\le p\le 2m$, 
by computing the characters of the $p$-exterior representations on certain elements of order 2 of $\mathrm{SO}(2m)$. 
From this identities, we deduce several new relations for binomial coefficients and Catalan numbers. 
\end{abstract}

\maketitle

\section{Introduction}
For each $0\le p\le n$, the \textit{binary Krawtchouk polynomial} (BKP for short) of order $n$ and degree $p$ is defined by 
\begin{equation} \label{eq. kraws}
K_p^n(x) = \sum_{j=0}^p (-1)^j \tbinom xj \tbinom{n-x}{p-j} 
\end{equation}
where $\tbinom xj = x(x-1)\cdots (x-j+1)/j!$ for $j\ge 1$ and $\tbinom x0 =1$. 
Thus, by definition, $K_p^n(j) \in\Z$ for every integer $j$. 
One can easily check that  
\begin{equation} \label{k0k1}
K_p^n(0)=\tbinom np, \qquad K_p^n(1) = (1-\tfrac{2p}{n}) \tbinom np, \qquad K_p^n(n) = (-1)^p \tbinom np
\end{equation}
and that we also have $K_0^n(x)=1$ and $K_1^n(x) = n-2x$.

These polynomials form a discrete family $\{K_p^n(x)\}_{p=0}^n$ of orthogonal polynomials with respect to the binomial distribution. 
They satisfy several identities such as orthogonality, $3$-term recursions in the $3$ variables, modularity properties, integral formulae, relations with other families of orthogonal polynomials, etc. See \cite{KL2} for a survey on binary Krawtchouk polynomials and its properties (also \cite{ChS} and \cite{KL}).

\msk
Binary Krawtchouk polynomials appear in several problems related with the abelian group $\Z_2^k$, for some $k$. The most commonly known examples of this are applications to combinatorial problems or to coding theory (see \cite{KL} for a survey). In combinatorics, BKP's appear in: 
(a) the existence or not of the inverse of the Radon transform on $\Z_2^k$ (\cite{DG}), 
(b) reconstruction problems on graphs (switching, reorientation, sign \cite{St}) and (c) multiple perfect coverings of $\Z_2^n$ 
(\cite{HHKL}, \cite{WCL}). Also, in the context of binary codes, BKP's play a role in: (a) the existence or not of binary perfect codes 
(\cite{vL}), (b) alternative expressions for the MacWilliams identities relating the weight enumerator of the code with the corresponding enumerator of its dual (\cite{HP}) and (c) in some universal bounds for codes 
(\cite{Le}). 
Notably, in all of these problems, the relevant question is the existence or not of integral zeros of the BKP's involved 
(see \cite{ChS}, \cite{Ha}, \cite{Ha2}, \cite{HS}, \cite{KL} 
for results related to integral zeros of BKP's).

Less known is the ubiquity of Krawtchouk polynomials in spectral geometry.
In this setting, BKP's were used to study isospectrality problems for elliptic differential operators $D$
acting on $\Z_2^k$-manifolds (i.e.\@ compact flat Riemannian manifolds having holonomy group isomorphic to $\Z_2^k$). 
Here, $D$ is either a Dirac-type operator (spin Dirac or signature operator, see \cite{MP}, \cite{MP2}, \cite{MP3}, \cite{MPR}) or a Laplacian (Hodge Laplacian, $p$-Laplacian, full Laplacian, see \cite{MPR}, \cite{MR3}, \cite{MR2}, \cite{MR1}). 
Again, the existence of integral zeros play a key role in the results.

\sk
In brief, in the first two sections after the Introduction, we obtain new identities for integral values of binary Krawtchouk polynomials and in the subsequent sections we give applications of them 
to binomial coefficients and Catalan numbers. 

An outline of the paper is as follows. In Section 2, we first recall some facts on the $p$-exterior representations of 
$\mathrm{SO}(2m)$, $0\le p\le 2m$, and give an explicit expression for $\chi_p(x)$, the character values of these representations at elements $x$ of the maximal torus $T_{2m}$ of $\mathrm{SO}(2m)$. 
Then, by relating the $p$-characters $\chi_p(x)$ at elements $x$ of order 2 with BKP's, we obtain a new identity for binary Krawtchouk polynomials (see Theorem~\ref{teo1}) of even order at even values and degree $p$, i.e.\@ $K_p^{2m}(2j)$, 
in terms of the integral values $K_k^m(j)$ of polynomials of smaller degree $k$ (see also Corollary~\ref{coro alt}). 

In Section 3, we generalize the reduction formula obtained in Theorem \ref{teo1} (see Theorem~\ref{teo2}) giving rise to a whole new family of identities for BKP's of the form $K_p^{2^r m}(2^s j)$. We then exhibit several explicit computations illustrating 
the results.

In Section 4, by evaluating the expressions previously obtained for BKP's, we present recursive relations between binomial coefficients (see \eqref{comb3}--\eqref{comb1}). In particular, expressions for $\tbinom{2m}{2q}$, $\tbinom{2m}{2q+1}$, $\tbinom{2m+1}{2q}$ and $\tbinom{2m+1}{2q+1}$ in terms of $\tbinom mq$ and falling factorials $(q)_0,(q)_1,\ldots,(q)_q$ are given in Theorem \ref{pochs}.
Also, for any $r,m,q \in \N_0$, we obtain the values for $\tbinom{2^rm}{2^rq}$ and $\tbinom{2^rm}{2^rq+1}$ modulo $2$, $4$, $8$ and $16$  
(see Proposition \ref{prop congs}) and modulo some higher powers of 2 (see Propositions \ref{mejora} and \ref{prop cong 2t}).

In the last two sections, we apply the results of Section 4 to study 
central binomial coefficients $c_m=\tbinom{2m}{m}$ and Catalan numbers $C_m$. 
Explicit expressions and recursions for $c_m$ can be found in \eqref{centralsum} -- \eqref{4q2q} and Proposition \ref{prop cm alt}, while mixed expressions between $c_m$'s and integral values of BKP's are given in 
Proposition~\ref{central}. In Proposition \ref{prop catalan} we obtain new recursion formulas for Catalan numbers $C_{2n}$ and $C_{2n+1}$ in terms of $C_0,\ldots,C_n$. 
Finally, we give some congruence relations for $C_m$ modulo 2, 4, 8 and 16.

\section{A reduction formula for binary Krawtchouk polynomials}
As we mentioned in the Introduction, certain properties of binary Krawtchouk polynomials lead to (spectral) geometrical results on $\Z_2^k$-manifolds. Here, in contrast, we will use a geometric result (characters of exterior representations) to obtain a relation between Krawtchouk polynomials. 

\goodbreak 

\subsubsection*{Characters of $p$-exterior representations} 
Let $n=2m$ with $m\in \N$. Consider the special orthogonal group  
$\mathrm{SO}(2m) = \{ A \in M_{2m}(\R)  : AA^t=A^tA=I, \det A=1\}$ of $\R^{2m}$. 
The maximal torus of $\mathrm{SO}(2m)$ is 
$T_{2m} = \{ x(t_1,\ldots,t_m) : t_1,\ldots,t_m \in \R \}$
where 
\begin{equation} \label{x}
x(t_1,\ldots,t_m) = diag(B_1,\ldots,B_m), 
\end{equation}
is the block diagonal matrix with blocks 
$B_i = \left( \begin{smallmatrix} \cos t_i & -\sin t_i \\ \sin t_i & \cos t_i \end{smallmatrix} \right)$ for $i=1,\ldots,m$.

For $0\le p\le m$, let $(\tau_p, \bigwedge^p(\R^{2m})_\C)$ be the $p$-exterior representation of $\mathrm{SO}(2m)$.  
Each $\tau_p$ is irreducible for $0\le p\le m-1$ and $\tau_m$ is the sum of 2 irreducible representations $\tau_m^+$ and $\tau_m^-$ given by the splitting 
$\bigwedge^m(\R^{2m})_\C = \bigwedge^m_+(\R^{2m})_\C \oplus \bigwedge^m_-(\R^{2m})_\C$. Let $\chi_p$ and $\chi_m^\pm$ denote the character of $\tau_p$ and 
$\tau_m^\pm$, respectively.

For $x\in T_{2m}$ there are combinatorial expressions for $\chi_p(x)$, $0\le p \le m$, that we now present 
(see Proposition 3.7 in \cite{MP}). If $I_m = \{1,\ldots, m\}$ and $x=x(t_1,\ldots,t_m)$ then we have
\begin{equation}\label{chipx}
\chi_p (x) = \sum_{\substack{\ell=0 \sk \\(-1)^{\ell+p}=1}}^p \; 2^{\ell} \, \tbinom{m-\ell}{\frac{p-\ell}2} \; \sum_{
\{j_1,\dots,j_\ell\} \subset I_m } \; \Big( \prod_{h=1}^{\ell} \cos t_{j_h} \Big)
\end{equation}
for $0\le p\le m-1$. By duality, $\chi_{2m-p} (x) = \chi_{p} (x)$, we know the characters also for the values $m+1 \le p\le 2m$. 
Furthermore, 
\begin{equation}\label{chimpm}
\chi_m^\pm (x) =  \Bigg( \sum_{\substack{\ell=1  \sk \\ \ell \; \mathrm{odd}}}^m
2^{\ell-1} \,  \tbinom{m-\ell}{\frac{m-\ell}2} \sum_{\{j_1,\dots,j_\ell\} \subset I_m} \Big( \prod_{h=1}^{\ell} \cos t_{j_h} \Big)
\Bigg) \; \pm \; 2^{m-1} i^m \Big( \prod_{j=1}^m \sin t_{j} \Big).
\end{equation}
Note that by \eqref{chimpm}, since $\chi_m^+ + \chi_m^- = \chi_m$, \eqref{chipx} also holds for $p=m$.

\begin{rem}
Clearly, $\chi_n(id)=2^n$. If $x\in T_{2m}$ is of order $2$ then $\chi_n(x)=0$. In fact, $\bigwedge = \bigwedge^0 \oplus \cdots \oplus \bigwedge^m_+ \oplus \bigwedge^m_- \oplus \cdots \oplus \bigwedge^n$. Thus, by using \eqref{pr1} and \eqref{pr3} below, we have
$\chi_n(x) = \sum_{p=0}^n \chi_p(x) = \sum_{p=0}^n K_p^n(2j)=0$ 
for any $1\le j\le m$.
\end{rem}

\subsubsection*{The reduction formula}
By using \eqref{chipx}, we will now express $K_p^{2m}(2j)$ 
as certain integral linear combination of $K_{\ell}^m(j)$ for some alternating indices $\ell$ in $\{0,1,\ldots,p\}$. 
\begin{teo} \label{teo1}
Let $m \in \N$ and $j\in \N_0$. For $0 \le p \le 2m$ we have
\begin{equation} \label{Kraw1}
K_p^{2m}(2j) = \sum_{\substack{\ell=0 \\ \ell \equiv p \,(2)}}^p 2^\ell \, \tbinom{m-\ell}{\f{p-\ell}{2}} \; K_\ell^{m}(j).
\end{equation}
\end{teo}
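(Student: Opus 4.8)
The plan is to evaluate the $p$-exterior character $\chi_p$ on the order-two elements of the maximal torus $T_{2m}$ in two different ways and to match the results; the point is that both sides of \eqref{Kraw1} will be seen to equal $\chi_p(x)$ for a suitable $x$. Fix $j$ with $0\le j\le m$ and let $x=x(t_1,\dots,t_m)\in T_{2m}$ be the element with $t_i=\pi$ for exactly $j$ indices and $t_i=0$ for the remaining $m-j$ indices; then $x^2=\I$, so $x$ has order two (or is the identity when $j=0$), and $\cos t_i\in\{1,-1\}$, the value $-1$ occurring exactly $j$ times.

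First I would compute $\chi_p(x)$ intrinsically from the eigenvalues. The matrix $x$ acts on $\C^{2m}$ with eigenvalues $e^{\pm it_i}$, which for the chosen $x$ are $1$ with multiplicity $2(m-j)$ and $-1$ with multiplicity $2j$. Since $\chi_p$ is the $p$-th elementary symmetric function of these eigenvalues, the identity $\sum_{p\ge 0}\chi_p(x)\,t^p=\prod_{k}(1+\lambda_k t)$ gives $\sum_{p}\chi_p(x)\,t^p=(1+t)^{2m-2j}(1-t)^{2j}$. Comparing this with the classical generating function $\sum_p K_p^n(a)\,t^p=(1+t)^{n-a}(1-t)^a$, immediate from \eqref{eq. kraws}, with $n=2m$ and $a=2j$, yields $\chi_p(x)=K_p^{2m}(2j)$ for every $p$.

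Second I would read $\chi_p(x)$ off from the combinatorial formula \eqref{chipx}. There the inner sum is the elementary symmetric polynomial $e_\ell(\cos t_1,\dots,\cos t_m)$, and for our $x$ this equals $e_\ell(1^{\,m-j},(-1)^{\,j})=[u^\ell](1+u)^{m-j}(1-u)^j=K_\ell^m(j)$, by the same generating function. Substituting, and noting that the parity condition $(-1)^{\ell+p}=1$ is precisely $\ell\equiv p\,(2)$, formula \eqref{chipx} becomes $\chi_p(x)=\sum_{\substack{0\le\ell\le p\\ \ell\equiv p\,(2)}}2^\ell\binom{m-\ell}{(p-\ell)/2}K_\ell^m(j)$ for $0\le p\le m$; the boundary case $p=m$ is included because, as noted after \eqref{chimpm}, \eqref{chipx} also holds for $p=m$. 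Equating the two expressions for $\chi_p(x)$ proves \eqref{Kraw1} for $0\le p\le m$ at the integers $0\le j\le m$, and since both sides are polynomials in $j$ of degree $\le p\le m$ agreeing at the $m+1$ points $j=0,\dots,m$, the identity extends to all $j\in\N_0$.

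For the remaining range $m<p\le 2m$ one is tempted to invoke Hodge duality $\chi_p=\chi_{2m-p}$ together with $\binom{m-\ell}{(2m-p-\ell)/2}=\binom{m-\ell}{(p-\ell)/2}$ and $2m-p\equiv p\,(2)$; this indeed recovers \eqref{Kraw1} at the special values $0\le j\le m$. The main obstacle I anticipate is exactly this high-degree range, because duality only delivers the identity at those special values, while the interpolation argument fails: $K_p^{2m}(2j)$ has degree $p>m$ in $j$, so $m+1$ points no longer determine it (and in fact $K_p^{2m}(2j)\neq K_{2m-p}^{2m}(2j)$ as polynomials). To close this uniformly for all $p$ and all $j$ I would instead verify \eqref{Kraw1} as a formal generating-function identity: writing $p=\ell+2s$ and summing the right-hand side against $t^p$ gives $(1+t^2)^m\sum_\ell K_\ell^m(j)\big(\tfrac{2t}{1+t^2}\big)^\ell$, and the substitution $u=\tfrac{2t}{1+t^2}$, for which $1\mp u=\tfrac{(1\mp t)^2}{1+t^2}$, collapses this to $(1-t)^{2j}(1+t)^{2m-2j}=\sum_p K_p^{2m}(2j)\,t^p$. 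Comparing coefficients of $t^p$ then gives \eqref{Kraw1} for every $p$ and every $j$, subsuming the geometric computation above.
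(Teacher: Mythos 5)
Your proposal is correct, and it splits naturally into a part that coincides with the paper's proof and a part that genuinely goes beyond it. Your first two paragraphs are, in substance, the paper's argument: the paper likewise equates two evaluations of $\chi_p$ at the order-two element $x(\pi,\dots,\pi,0,\dots,0)$, except that where you derive $\chi_p(x)=K_p^{2m}(2j)$ from the eigenvalue generating function $\prod_k(1+\lambda_k t)=(1+t)^{2m-2j}(1-t)^{2j}$, the paper cites the known identity \eqref{pr1}, $\chi_p(B)=K_p^n(n-n_B)$, from \cite{MR1} and \cite{MPR}, and your identification of the inner sum of \eqref{chipx} with $e_\ell\big(1^{m-j},(-1)^j\big)=K_\ell^m(j)$ is exactly the paper's step \eqref{pr2}. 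Your worry about the range $m<p\le 2m$ is well placed: the paper covers it only by the one-line appeal to duality $\chi_{2m-p}=\chi_p$, leaving implicit precisely the complement identity $\binom{m-\ell}{(2m-p-\ell)/2}=\binom{m-\ell}{(p-\ell)/2}$, the parity match, and the truncation of the summation range that you spell out. Where you depart from the paper is the closing formal computation: resumming the right-hand side of \eqref{Kraw1} against $t^p$ to get $(1+t^2)^m\sum_\ell K_\ell^m(j)u^\ell$ with $u=2t/(1+t^2)$, and collapsing via $1\mp u=(1\mp t)^2/(1+t^2)$ to $(1+t)^{2m-2j}(1-t)^{2j}$, appears nowhere in the paper, and it buys something concrete: a purely elementary, representation-theory-free proof valid for all $0\le p\le 2m$ and all $j$ at once. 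In fact it repairs the one soft spot in the paper's treatment: the geometric argument only reaches integer $0\le j\le m$, and the paper's note that the remaining $j$ are trivial ``since $K_p^n(j)=0$ for $j<0$ or $j>m$'' is not literally correct (e.g.\ $K_1^{2m}(2j)=2m-4j\neq 0$ for $j>m$); your interpolation argument settles $p\le m$ and your generating-function identity settles the rest, so the full statement for $j\in\N_0$ is actually proved more completely in your version than in the paper's.
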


\begin{proof}
If $C\in \R^{n \times n}$, let $n_C= \dim \, (\R^n)^C$, i.e.\@ the dimension of the space fixed by $C$. 
Let $B$ be a diagonal matrix in $\mathrm{SO}(n)$ of order 1 or 2, that is 
$$B=\mathrm{diag}(\underbrace{-1,\ldots,-1}_{e},\underbrace{1,\ldots,1}_{f})$$
where $\varepsilon_i \in \{\pm 1\}$, $1 \le i \le n$, with an even number of $-1$'s (since $\det(B)=1$). 
If $e_1, \ldots, e_n$ is the canonical basis of $\R^n$, 
put $I_B = \{ 1\le i \le n: Be_i=e_i \}$, hence $n_B=|I_B|$, and $I_B' = \{ 1\le j \le n : j\notin I_B\}$. 
We have that 
\begin{equation} \label{pr1}
\chi_p(B) = K^n_p(n-n_B) = \sum_{\substack{J\subset I_n \\ |J|=p}} (-1)^{|J\cap I_B'|} 
\end{equation}
(see (3.2) and Remark 3.6 in in \cite{MR1}).
Actually, \eqref{pr1} holds for every $B\in \mathrm{SO}(n)$ of order 2, not necessarily diagonal 
(see the proof of Theorem 2.1 in \cite{MPR}).

\sk
Now, let $n=2m$ and let $B$ be any matrix in $\mathrm{SO}(2m)$ of order $\le 2$. Such $B$ is conjugate in $\mathrm{SO}(2m)$ to an element $x_B \in T_{2m}$ as in \eqref{x}, we denote this by $B \sim x_B$. Without loss of generality we can assume that 
$$B=\mathrm{diag}(\underbrace{-1,\ldots,-1}_{e},\underbrace{1,\ldots,1}_{f})$$
with $e+f=2m$ ($e=0$ if and only if $B=Id$). Clearly $B$ is conjugate to $x_B$, where 
$$x_B = x(\underbrace{\pi,\ldots,\pi}_{j},\underbrace{0,\ldots,0}_{m-j}),$$
with $e=2j$ and $f=2(m-j)$. 
By evaluating \eqref{chipx} at $x_B$, we have
\begin{equation} \label{pr2}
\chi_p( x_B) = \sum_{\substack{\ell = 0 \sk \\(-1)^{\ell+p}=1}}^p \; 2^{\ell} \, \tbinom{m-\ell}{\frac{p-\ell}2} \,
\sum_{\substack{J\subset I_m \sk \\ |J|=\ell}} (-1)^{|J \cap I_j|}. 
\end{equation}

Since $B\sim x_B$, the characters of $B$ and $x_B$ coincide and we can equate \eqref{pr1} to \eqref{pr2}. Thus, since $n-n_B=e=2j$, we have
$$K_p^{2m}(2j) = \chi_p(B) = \chi_p( x_B) = \sum_{\substack{\ell = 0 \sk \\(-1)^{\ell+p}=1}}^p \; 2^{\ell} \,
\tbinom{m-\ell}{\frac{p-\ell}2} \,  K_\ell^{m}(j)$$
for every $0\le p \le n=2m$ and $1\le j\le m$.
\end{proof}

Note that expression \eqref{Kraw1} trivially holds for $j \in \Z \smallsetminus \{0,1,\ldots,m\}$ since $K_p^n(j)=0$ for every 
$j<0$ or $j>m$.

\begin{rem} \label{rem1}
Since $0\le \ell\le p\le 2m$, we have $\tbinom{m-\ell}{(p-\ell)/2}=0$ for $\ell>2m-p$ and $K_\ell^m(j)=0$ for $\ell>m$. 
Thus, the upper limit in the sum in Theorem \ref{teo1}, say $\rho$, is actually
the minimum between $p,m$ and $2m-p$, if $p$ and $m$ have the same parity; or the minimum between $p,m-1$ and $2m-p$, otherwise. 
Thus, putting $\mu_p(m)=m$ if $p\equiv m \pmod 2$ and $\mu_p(m)=m-1$ if $p \not\equiv m \pmod 2$,
we have that 
$$\rho = \rho(p,m) = \min\{p,\mu_p(m),2m-p\}$$
is the true upper limit in the summation in \eqref{Kraw1}. 
\end{rem}

By considering the cases $p$ even or odd separately, we get simpler expressions for \eqref{Kraw1} as follows
\begin{equation} \label{Kraw1b}
\begin{split}
K_{2q}^{2m}(2j) & = \sum\limits_{k=0}^q 4^k \; \tbinom{m-2k}{q-k} \; K_{2k}^m(j)  
\bsk \\
K_{2q+1}^{2m}(2j) & = 2 \, \sum\limits_{k=0}^q 4^k \; \tbinom{m-2k-1}{q-k} \; K_{2k+1}^m(j)  
\end{split}
\end{equation}
with $q\le m$ in the even case and $q\le m-1$ in the odd case.

\sk

There are 3 basic symmetry relations between binary Krawtchouk polynomials; namely, 
$K_k^n(n-k) = K_{n-k}^n(k)$, $K_k^n(j)=(-1)^j K_{n-k}^n(j)$ and  
\begin{equation} \label{sym}
\tbinom nj K_k^n(j)= \tbinom nk K_j^n(k).
\end{equation}
By using \eqref{sym} we can get an expression for $K_{2j}^{2m}(p)$ similar to \eqref{Kraw1}.

\begin{coro} \label{coro alt}
For $0\le j,p \le m \in \N$ we have
\begin{equation} \label{kraw alt1} 
K_{2j}^{2m}(p) = \tf{\binom{2m}{2j}}{\binom{2m}{p} \binom mj} 
\sum_{\substack{\ell=0 \\ \ell \equiv p \,(2)}}^p 2^\ell \, \tbinom{m-\ell}{\f{p-\ell}{2}} \tbinom{m}{\ell}\; K_j^{m}(\ell).
\end{equation}
\end{coro}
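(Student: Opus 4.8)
The plan is to derive \eqref{kraw alt1} directly from the reduction formula \eqref{Kraw1} of Theorem~\ref{teo1}, applying the symmetry relation \eqref{sym} twice: once on the outside, to turn $K_{2j}^{2m}(p)$ into a scalar multiple of $K_p^{2m}(2j)$, and once inside the sum, to replace each $K_\ell^m(j)$ by a multiple of $K_j^m(\ell)$.

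First I would apply \eqref{sym} with $n=2m$. Recalling that \eqref{sym} reads $\binom{n}{j}K_k^n(j)=\binom{n}{k}K_j^n(k)$, I take $k=2j$ and evaluation point $p$, which gives $\binom{2m}{p}\,K_{2j}^{2m}(p)=\binom{2m}{2j}\,K_p^{2m}(2j)$, that is,
\[
K_{2j}^{2m}(p)=\frac{\binom{2m}{2j}}{\binom{2m}{p}}\,K_p^{2m}(2j).
\]
The hypothesis $0\le j,p\le m$ ensures $\binom{2m}{p}\neq 0$, so this division is legitimate.

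Next I would substitute the right-hand side of \eqref{Kraw1} for $K_p^{2m}(2j)$, which introduces the sum $\sum 2^\ell\binom{m-\ell}{(p-\ell)/2}\,K_\ell^m(j)$ over the indices $\ell$ with $0\le\ell\le p$ and $\ell\equiv p\,(2)$. Finally, I would apply \eqref{sym} once more, now with $n=m$, degree $\ell$ and argument $j$, obtaining $\binom{m}{j}K_\ell^m(j)=\binom{m}{\ell}K_j^m(\ell)$, i.e. $K_\ell^m(j)=\frac{\binom{m}{\ell}}{\binom{m}{j}}K_j^m(\ell)$, where $\binom{m}{j}\neq 0$ again because $0\le j\le m$. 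Pulling the constant $1/\binom{m}{j}$ out of the sum and collecting the three binomial coefficients $\binom{2m}{2j}$, $\binom{2m}{p}$ and $\binom{m}{j}$ into a single prefactor produces exactly \eqref{kraw alt1}.

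There is no genuine analytic difficulty here; the proof is a purely formal manipulation, so the ``hard part'' is only careful bookkeeping. The points that require attention are: invoking \eqref{sym} with the degree and the evaluation point interchanged \emph{simultaneously} with the matching binomial coefficients; verifying that the parity constraint $\ell\equiv p\pmod 2$ and the summation range $0\le\ell\le p$ are left untouched by the inner symmetry step (they are, since only the factor $K_\ell^m(j)$ is rewritten); and confirming that every binomial coefficient placed in a denominator is nonzero over the range $0\le j,p\le m$. This last check is precisely where the hypothesis of the corollary is used.
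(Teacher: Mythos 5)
Your proof is correct and follows exactly the paper's route: apply the symmetry relation \eqref{sym} to $K_{2j}^{2m}(p)$, substitute the reduction formula \eqref{Kraw1} for $K_p^{2m}(2j)$, and apply \eqref{sym} once more inside the sum to trade each $K_\ell^m(j)$ for $\tfrac{\binom{m}{\ell}}{\binom{m}{j}}K_j^m(\ell)$. Your added checks (nonvanishing of the binomial denominators for $0\le j,p\le m$ and preservation of the parity constraint) are sound bookkeeping that the paper leaves implicit.
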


\begin{proof}
The result follows directly by first applying 
\eqref{sym} to $K_{2j}^{2m}(p)$ and then using \eqref{Kraw1} and \eqref{sym} again. 
\end{proof}

\begin{rem}
With expressions \eqref{Kraw1} and \eqref{kraw alt1}, we can recursively compute 
$K_p^{2m}(j)$, by using BKP's of order $2m$, for $j$ even and any $p$ or for $p$ even and any $j$. It would remain to cover the cases 
$K^{2m}_{\textrm{odd}}(\textrm{odd})$, which are only $m^2$ cases out of $(2m+1)^2$. Hence, asymptotically, we cover 75\% of the cases, as one could expect a priori. The other symmetry relations are of no help for this matter.
\end{rem}

As a direct consequence of the previous result we have the following cancellation property for Krawtchouk polynomials.
\begin{coro} \label{coro1}
Let $m,j\in \N$. For $0 \le p \le m$ we have
\begin{equation} \label{Kraw2}
\sum_{p=0}^{2m} \sum_{\substack{\ell=0 \\ \ell\equiv p \,(2) }}^p 2^\ell \, \tbinom{m-\ell}{\f{p-\ell}{2}} \; K_\ell^m(j) 
= 2^m \sum_{\ell=0}^{m} K_\ell^m(j)= 0.
\end{equation}
\end{coro}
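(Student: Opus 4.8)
The plan is to read the displayed claim as a chain of two equalities, namely $A = 2^m\sum_{\ell=0}^{m}K_\ell^m(j)$ followed by $2^m\sum_{\ell=0}^{m}K_\ell^m(j)=0$, where $A$ denotes the double sum on the left. The first equality is a purely algebraic reorganization of $A$ (equivalently, it is $\sum_{p=0}^{2m}K_p^{2m}(2j)$ rewritten via Theorem~\ref{teo1}), while the second is a standard vanishing property of Krawtchouk polynomials. I would establish each separately; together they give $A = 2^m\sum_{\ell=0}^{m}K_\ell^m(j) = 0$.

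For the first equality I would interchange the order of summation in $A$, collecting the coefficient of each $K_\ell^m(j)$. By Remark~\ref{rem1} we have $K_\ell^m(j)=0$ for $\ell>m$, so only $0\le \ell\le m$ contribute. For a fixed such $\ell$, the outer index runs over $p=\ell,\ell+2,\ell+4,\dots$ with $p\le 2m$; writing $p=\ell+2k$ the relevant binomial coefficient is $\tbinom{m-\ell}{k}$, which vanishes as soon as $k>m-\ell$, so $k$ effectively ranges from $0$ to $m-\ell$. Hence the coefficient of $2^\ell K_\ell^m(j)$ collapses, by the binomial theorem, to $\sum_{k=0}^{m-\ell}\tbinom{m-\ell}{k}=2^{m-\ell}$, and the total coefficient of $K_\ell^m(j)$ becomes $2^\ell\cdot 2^{m-\ell}=2^m$. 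This yields $A=2^m\sum_{\ell=0}^{m}K_\ell^m(j)$. The one point needing care is the cutoff in the inner sum: one must check that the constraint $p\le 2m$ is never binding before the binomial coefficient already forces the term to zero, which holds since $m-\ell\le m-\ell/2$ for all $\ell\ge 0$.

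For the second equality I would show $\sum_{\ell=0}^{m}K_\ell^m(j)=0$ whenever $1\le j\le m$. Starting from the definition \eqref{eq. kraws}, summing over $\ell$, and renaming the inner summation variable to $i$, I would interchange the two sums and re-index by $i$ and $s=\ell-i$. Since $j$ is a non-negative integer, the factor $\tbinom{j}{i}$ kills all terms with $i>j$ and (as $j\le m$) the factor $\tbinom{m-j}{s}$ kills all terms with $s>m-j$, so the constraint $i+s=\ell\le m$ is automatically satisfied; the double sum therefore factors as $\bigl(\sum_{i=0}^{j}(-1)^i\tbinom{j}{i}\bigr)\bigl(\sum_{s=0}^{m-j}\tbinom{m-j}{s}\bigr)=(1-1)^j\,2^{m-j}$. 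For $j\ge 1$ the first factor vanishes, giving $\sum_{\ell=0}^{m}K_\ell^m(j)=0$. Equivalently, this is the evaluation at $z=1$ of the generating identity $\sum_{\ell\ge 0}K_\ell^m(x)z^\ell=(1-z)^x(1+z)^{m-x}$, and it is the exact analogue (with order $m$ and argument $j$) of the vanishing $\sum_{p=0}^{2m}K_p^{2m}(2j)=0$ already observed in the opening Remark of this section.

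There is no genuinely hard step here: the statement is a bookkeeping consequence of Theorem~\ref{teo1} together with a standard cancellation property. The two places demanding attention are the index gymnastics in the interchange (correctly determining the effective range of $k$ and invoking Remark~\ref{rem1} to truncate $\ell$ at $m$), and the range of $j$, since the factorization argument delivers the vanishing cleanly for $1\le j\le m$, which is precisely the range inherited from the geometric derivation of Theorem~\ref{teo1}.
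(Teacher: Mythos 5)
Your proof is correct and, for the first equality, is exactly the paper's argument: interchange the order of summation, truncate $\ell$ at $m$ since $K_\ell^m(j)=0$ for $\ell>m$, and collapse the coefficient of $K_\ell^m(j)$ to $2^\ell\sum_{k=0}^{m-\ell}\tbinom{m-\ell}{k}=2^m$ via the binomial theorem (your explicit check that the cutoff $p\le 2m$ never bites before the binomial coefficient vanishes is a point the paper leaves implicit). The only difference is that for the second equality the paper simply cites \eqref{pr3} from \cite{MPR}, whereas you prove it from scratch by factoring the double sum into $(1-1)^j\,2^{m-j}$ (equivalently, evaluating the generating identity at $z=1$), which makes the corollary self-contained and makes explicit the restriction $1\le j\le m$ under which the vanishing holds.
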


\begin{proof}
In \cite{MPR} we have shown that 
\begin{equation} \label{pr3}
K_0^n(j) + K_1^n(j) +\cdots + K_n^n(j) = 0
\end{equation}
for every $1\le j\le n$, and hence the second identity in \eqref{Kraw2} follows. 
To get the first identity in \eqref{Kraw2} we apply \eqref{pr3} to \eqref{Kraw1}. 
Note that $\sum\limits_{p=0}^{2m} \sum\limits_{\ell=0}^p = \sum\limits_{\ell=0}^{2m} \sum\limits_{p=\ell}^{2m}$ 
and, since $K_k^n(j)=0$ for $j>n$ and $\tbinom{m-\ell}{\f{p-\ell}{2}}=0$ for $\f{p-\ell}{2} > m-\ell$, we have
$$\sum_{p=0}^{2m} \sum_{\substack{\ell=0 \\ \ell\equiv p \,(2) }}^p 2^\ell \, \tbinom{m-\ell}{\f{p-\ell}{2}} \; K_\ell^m(j) = 
\sum_{\ell=0}^{m}  2^\ell \, \Bigg( \sum_{\substack{p=\ell \\ \ell\equiv p \,(2) }}^{2m-\ell} \tbinom{m-\ell}{\f{p-\ell}{2}} \Bigg) \; 
K_\ell^m(j).$$
Finally, since
$$\sum_{\substack{p=\ell \\ \ell\equiv p \,(2) }}^{2m-\ell} \tbinom{m-\ell}{\f{p-\ell}{2}} = 
\sum_{k=0}^{m-\ell} \tbinom{m-\ell}{k} = 2^{m-\ell}$$ 
the left hand side of \eqref{Kraw2} equals $2^m \sum_{\ell=0}^{m} K_\ell^m(j)$,
as desired.
\end{proof}

\begin{ejem}
We now illustrate the big cancellations present in the corollary. 
Let $m=4$ and $j=3$. The terms of the inner sum of the left hand side of \eqref{Kraw2} are 
\begin{equation*}
\begin{aligned}
& p=0,1,7,8 & \qquad & 2^0 \tbinom 40 K_0^4(3), \quad 2^1 \tbinom 30 K_1^4(3), \quad  2^1 \tbinom 33 K_1^4(3), 
\quad  2^0 \tbinom 44 K_0^4(3) & \\
& p=2,6 & \qquad & 2^0 \tbinom 41 K_0^4(3) + 2^2 \tbinom 20 K_2^4(3), \quad 2^0 \tbinom 43 K_0^4(3) + 2^2 \tbinom 22 K_2^4(3) & \\
& p=3,5 & \qquad & 2^1 \tbinom 31 K_1^4(3) + 2^3 \tbinom 10 K_3^4(3), \quad 2^1 \tbinom 32 K_1^4(3) + 2^3 \tbinom 11 K_3^4(3) & \\
& p=4 & \qquad & 2^0 \tbinom 42 K_0^4(3) + 2^2 \tbinom 21 K_2^4(3) + 2^4 \tbinom 00 K_4^4(3) & \\
\end{aligned}
\end{equation*} 
respectively. 
The sum of all these terms is
$$2^0 S_4 \, K_0^4(3) + 2^1 S_3 \, K_1^4(3) + 2^2 S_2 \, K_2^4(3) + 
2^3 S_1 \,K_3^4(3) + 2^4 S_0 \, K_4^4(3).$$
where $S_j = \tbinom j0 + \cdots + \tbinom jj$ 
for $0\le j\le 4$.
Thus, the left hand side of \eqref{Kraw2} is 
$$\sum_{p=0}^8 \sum_{\substack{\ell=0 \\ \ell\equiv p \,(2) }}^p 2^\ell \, \tbinom{4-\ell}{\f{p-\ell}{2}} \; K_\ell^4(3) =  
2^4 \big( K_0^4(3) + K_1^4(3) + K_2^4(3) + K_3^4(3) + K_4^4(3) \big) =0 $$
since the values for $K_i^4(3)$ with $i=0,\ldots,4$ are $1,-2,0,2$ and $-1$ respectively. 
\end{ejem}

\section{A family of identities for BKP's}
Formula \eqref{Kraw1} can be iterated to obtain (more involved) expressions for $K_p^{2^r m}(2^s j)$ 
in terms of $K_p^m(j)$'s for different $p$'s. 
For instance, for $r=s=2$ and $j\le m$ odd, we have
\begin{eqnarray*}
K_p^{4m}(4j) &=& \sum_{\substack{\ell=0 \\ \ell \equiv p \,(2)}}^p 2^\ell \, \tbinom{2m-\ell}{\f{p-\ell}{2}} \; K_\ell^{2m}(2j) 
= \sum_{\substack{\ell=0 \\ \ell \equiv p \,(2)}}^p 2^\ell \, \tbinom{2m-\ell}{\f{p-\ell}{2}} \sum_{\substack{k=0 \\ k \equiv \ell\,(2)}}^\ell 2^k \, \tbinom{m-k}{\f{\ell-k}{2}} \; K_k^m(j) ,
\end{eqnarray*}   
where we have applied \eqref{Kraw1} twice.
That is,
\begin{equation} \label{iter1}
K_p^{4m}(4j) = \sum_{\substack{0\le k \le \ell \le p \\ k\equiv \ell \equiv p \,(2)}} 2^{k+\ell} \, \tbinom{2m-\ell}{\f{p-\ell}{2}} \, \tbinom{m-k}{\f{\ell-k}{2}} \; K_k^m(j).
\end{equation}
However, for $K_p^{4m}(2j)$ we can apply \eqref{Kraw1} only once.

\subsubsection*{A generalized reduction formula}
By continuing with the previous process, we can express the values of $K_p^{2^r m}(x)$ for $x$ even as linear combinations of values of some BKP's of much smaller orders, thus generalizing Theorem \ref{teo1}. 
We will need the following notation. For $r,s\in \N$, with $r$ fixed, put 
\begin{equation} \label{fsr}
f(s,r) := (r-s) \, \chi_{_{[1,r]}}(s) = \begin{cases} r-s & \qquad \text{if } s \le r, \\ 0 & \qquad \text{if } s > r, \end{cases}
\end{equation} 
where $\chi_{_{[1,r]}}$ is the characteristic function of the interval $[1,r]$. Note that $f(t,t)=0$ for every $t$.

\begin{teo} \label{teo2}
Let $m,p,r,s\in \N$, $j\in \N_0$ such that 
$2(\nu-1) \le p \le 2^r m$ and $2^s j \le 2^r m$ where $\nu = \min\{r,s\}$.
Then, we have 
\begin{equation} \label{Kraw3c}
K_p^{2^r m}(2^s j) = \sum_{\substack{0\le p_\nu \le \cdots \le p_1 \le p \sk \\  p_\nu 
\equiv \cdots \equiv p_1\equiv p \,(2)}}  
\, 2^{p_1 + \cdots + p_\nu} \left( \prod_{k=1}^{\nu} \tbinom{2^{r-k} m - p_k}{\f{p_{k-1}-p_k}{2}}  \right) \: 
K_{p_\nu}^{2^{f(s,r)}m}(2^{f(r,s)}j) 
\end{equation}
where we use the convention $p_0 = p$.
\end{teo}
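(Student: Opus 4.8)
The plan is to prove \eqref{Kraw3c} by iterating the single-step reduction of Theorem~\ref{teo1} exactly $\nu=\min\{r,s\}$ times. The mechanism is that one application of \eqref{Kraw1} simultaneously halves the order, sending $2M\mapsto M$, and the evaluation point, sending $2J\mapsto J$. Hence, starting from $K_p^{2^rm}(2^sj)$, each step strips one factor of $2$ from \emph{both} the order exponent and the point exponent, and we may keep going precisely as long as both remain positive, i.e.\@ for $\nu$ steps. This already explains the shape of the residual factor $K_{p_\nu}^{2^{f(s,r)}m}(2^{f(r,s)}j)$: after $\nu$ reductions the order exponent is $r-\nu=\max\{0,r-s\}$ and the point exponent is $s-\nu=\max\{0,s-r\}$, and a direct check against \eqref{fsr} gives $r-\nu=f(s,r)$ and $s-\nu=f(r,s)$.

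Concretely, I would fix $r,s$ and prove by induction on $t\in\{1,\dots,\nu\}$ the intermediate identity
\[
K_p^{2^rm}(2^sj)=\sum_{\substack{0\le p_t\le\cdots\le p_1\le p\\ p_t\equiv\cdots\equiv p_1\equiv p\,(2)}} 2^{p_1+\cdots+p_t}\Big(\prod_{k=1}^{t}\tbinom{2^{r-k}m-p_k}{\f{p_{k-1}-p_k}{2}}\Big)\,K_{p_t}^{2^{r-t}m}(2^{s-t}j),
\]
with the convention $p_0=p$. The base case $t=1$ is exactly \eqref{Kraw1} applied to $K_p^{2^rm}(2^sj)$, writing the order as $2\cdot2^{r-1}m$ and the point as $2\cdot2^{s-1}j$ (legitimate since $r,s\ge1$). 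For the inductive step, assuming $t+1\le\nu$, I apply Theorem~\ref{teo1} to each innermost factor $K_{p_t}^{2^{r-t}m}(2^{s-t}j)=K_{p_t}^{2\cdot2^{r-t-1}m}(2\cdot2^{s-t-1}j)$; this is allowed because $t\le\nu-1$ forces $r-t\ge1$ and $s-t\ge1$, so the order $2^{r-t}m$ and the point $2^{s-t}j$ are both even. Substituting the resulting sum over a new index $p_{t+1}$ with $0\le p_{t+1}\le p_t$ and $p_{t+1}\equiv p_t\,(2)$, and merging the chains, produces the $(t+1)$-term identity with the extra factor $2^{p_{t+1}}\tbinom{2^{r-t-1}m-p_{t+1}}{(p_t-p_{t+1})/2}$ slotting in as the $k=t+1$ term of the product. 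Taking $t=\nu$ and rewriting the exponents $r-\nu,s-\nu$ via $f$ yields \eqref{Kraw3c}.

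Two points need care along the way. First, each application of Theorem~\ref{teo1} is to a Krawtchouk value whose degree may a priori exceed its order; but such terms vanish by the convention recorded in Remark~\ref{rem1} (and in the note following Theorem~\ref{teo1}), so they contribute nothing and the recursion stays consistent. The hypotheses $p\le2^rm$ and $2^sj\le2^rm$ are exactly what guarantee that the first reduction is valid and that the point never overtakes the order at any intermediate stage (multiplying $2^{s-t}j\le2^{r-t}m$ back up by $2^t$ returns the assumed $2^sj\le2^rm$), while the lower bound $2(\nu-1)\le p$ ensures the degree is large enough to sustain all $\nu$ reductions within the stated index ranges. Second---and this is the only genuinely delicate bookkeeping---one must confirm the characteristic-function identities $r-\min\{r,s\}=f(s,r)$ and $s-\min\{r,s\}=f(r,s)$, which is a short case analysis on whether $r\le s$ or $r>s$ and is precisely the reason \eqref{fsr} is defined as it is. I expect this index/exponent bookkeeping, rather than any analytic difficulty, to be the main obstacle: the combinatorial content is entirely carried by Theorem~\ref{teo1}, and the work lies in organizing the nested sums and matching the terminal order and point to $f(s,r)$ and $f(r,s)$.
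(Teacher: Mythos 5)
Your proposal is correct and follows essentially the same route as the paper: both iterate the one-step reduction of Theorem~\ref{teo1} exactly $\nu=\min\{r,s\}$ times, track the chain $p\ge p_1\ge\cdots\ge p_\nu$ with constant parity, and match the residual order and argument to $f(s,r)$ and $f(r,s)$. The only difference is organizational---you run a single uniform induction on the number $t$ of reduction steps, whereas the paper inducts on $r$ and splits into the cases $s=r$, $s<r$, $s>r$ before unifying via \eqref{fsr}---and your handling of the out-of-range terms (vanishing binomials and Krawtchouk values, as in Remark~\ref{rem1}) is consistent with the paper's conventions.
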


\noindent
\textit{Note.} If $r=s$ there is a big simplification in \eqref{Kraw3c}, since in this case 
$K_{p_\nu}^{2^{f(s,r)}m}(2^{f(r,s)}j)$ equals $K_{p_r}^{m}(j)$ for $m,j$ odd. By taking $r=s=1$ we get Theorem \ref{teo1}.

\begin{proof}
We will apply Theorem \ref{teo1} the maximum possible number of times, which is $\nu$. So, we will consider the cases $s\le r$ and 
$s>r$ separately. 

\sk (i) Assume first that $s\le r$. Hence we can apply \eqref{Kraw1} just $s$ times. We proceed by induction on $r$ using 
\eqref{Kraw1}. The first step in the induction, i.e.\@ $r=2$ ($r=1$ is just Theorem \ref{teo1}), is done in the observation before the statement. 
For the inductive step, suppose first that $s=r$.
By considering 
$m'=2^{r-1}m$ and $j'=2^{r-1} j$, we can apply Theorem~\ref{teo1} and get
$$K_p^{2^rm}(2^rj) = K_p^{2m'}(2j') = \sum_{\substack{0\le \ell \le  p \sk \\  \ell \equiv p \,(2)}} 2^\ell \, \tbinom{m'-p}{\f{p-\ell}2} \; 
K_\ell^{2^{r-1}m}(2^{r-1}j).$$ 
Thus, by the inductive hypothesis, 
$$K_p^{2^rm}(2^rj) = \sum_{\substack{0\le \ell \le  p \sk \\  \ell \equiv p \,(2)}} 2^\ell \, \tbinom{m'-p}{\f{p-\ell}2} \, 
\sum_{\substack{0\le q_{r-1} \le \cdots \le q_1 \le \ell \sk \\  q_{r-1} \equiv  \cdots \equiv q_{1} \equiv p \;(2)}}  \, 
2^{q_1 + \cdots + q_{r-1}} \prod_{k=1}^{r-1} \tbinom{2^{r-1-k} m - q_k}{\tf{q_{k-1} - q_k}{2}}  \:   K_{q_{r-1}}^{m}(j).$$
By renaming the indices as follows $p_1=\ell$, $p_2=q_1$, $\ldots,$ $p_r=q_{r-1}$, we get 
\begin{equation} \label{Kraw3d}
K_p^{2^r m}(2^r j) = \sum_{\substack{0\le p_r \le \cdots \le p_1 \le p \sk \\  p_r \equiv \cdots \equiv p_1\equiv p \,(2)}}  
\, 2^{p_1 + \cdots + p_r} \left( \prod_{k=1}^r \tbinom{2^{r-k} m - p_k}{\f{p_{k-1}-p_k}{2}}  \right) \:   K_{p_r}^m(j) ,
\end{equation}
which equals expression \eqref{Kraw3c} with $\nu=s=r$ and $f(r,r)=0$.

For the case when $s<r$, we proceed similarly as before. By applying \eqref{Kraw1} a number $s$ of times, we get
\begin{equation} \label{Kraw3}
K_p^{2^r m}(2^s j) = \sum_{\substack{0\le p_s \le \cdots \le  p_2 \le p_1 \le p \sk \\  p_s 
\equiv \cdots \equiv p_1\equiv p \!\!\!\pmod 2}}  
\, 2^{p_1 + \cdots + p_s} \left( \prod_{k=1}^{s} \tbinom{2^{r-k} m - p_k}{\f{p_{k-1}-p_k}{2}}  \right) \:   K_{p_s}^{2^{r-s}m}(j) ,
\end{equation}
which equals \eqref{Kraw3c} since $\nu=s$, $f(s,r)=r-s$ and $f(r,s)=0$, in this case.

\sk 
(ii) When $s>r$, we can apply \eqref{Kraw1} a number $r$ of times. By proceeding similarly as before, we get 
\begin{equation} \label{Kraw3b}
K_p^{2^r m}(2^s j) = \sum_{\substack{0\le p_r \le \cdots \le  p_2 \le p_1 \le p \sk \\  p_r 
\equiv \cdots \equiv p_1\equiv p \!\!\!\pmod 2}}  
\, 2^{p_1 + \cdots + p_r} \left( \prod_{k=1}^{r} \tbinom{2^{r-k} m - p_k}{\tf{p_{k-1}-p_k}{2}}  \right) \:   K_{p_r}^{m}(2^{s-r}j).
\end{equation}

\sk 
It is now clear that, by using \eqref{fsr}, the expressions obtained in \eqref{Kraw3d} -- \eqref{Kraw3b} take the single form in \eqref{Kraw3c}, 
and the result thus follows. 
Finally, note that we need to consider $p\ge 2(\nu-1)$, for if not the summation set would be empty. 
\end{proof}

Typically, one will apply \eqref{Kraw3c} when $j$ and $m$ are odd, for if not we can keep absorbing the powers of 2. However, we also want to consider the case $j=0$.

\begin{rem} \label{rem2}
There are some redundancies in \eqref{Kraw3c} because some terms can vanish. A more accurate, though complicated, expression can be obtained by taking into account the exact limits of summation (see Remark \ref{rem1}). In this case we have
$$\sum_{\substack{0\le p_\nu \le \cdots \le p_1 \le p \sk \\  p_\nu 
\equiv \cdots \equiv p_1\equiv p \,(2)}} \quad \rightsquigarrow \quad 
\sum\limits_{\substack{p_\nu=0 \\ p_\nu\equiv p \,(2)}}^{\rho_{\nu-1}} \cdots \sum_{\substack{p_2=0 \\ 
p_2\equiv p \,(2)}}^{\rho_1} \sum_{\substack{p_1=0 \\ p_1\equiv p \,(2)}}^{\rho}$$
where the symbol $\rightsquigarrow$ means that the first summation (which is easier to write) should be replaced by the second summation
(which looks more complicated but involves much less terms) and where 
$$\rho_i = \min\{ \, p_{i-1}, \, \mu_{p_{i-1}}(2^{r-i+1}m), \, 2^{r-i}m-p_{i-1} \, \}$$
for $i=0,\ldots,\nu-1$ with the conventions $\rho_0=\rho$ and $p_0=p$.
\end{rem}

\subsubsection*{Explicit computations}
We will now illustrate the formulas obtained in Theorems \ref{teo1} and \ref{teo2} with some examples. 
It will be helpful to have the values $K_k^n(j)$, $0\le i,j \le n$, for $n=1,2,\ldots,8$ at hand. Thus, we present them as matrices 
$K_n = (K_{ij}^n)$ with $K_{ij}^n= K_i^n(j)$ in Table \ref{tablitas} below. 

\begin{table}[ht] 
\caption{The values of $K_p^n(j)$ for $1\le n\le 8$ and $0\le p,j \le n$}
\hrule \hrule
{\scriptsize
\begin{gather*}
K_1 = 
\left( \begin{array}{rr}
1& 1  \\
1& -1 \end{array} \right)
\qquad \qquad 
K_2 = 
\left( \begin{array}{rrr}
1& 1& 1 \\
2& 0&-2 \\
1& -1& 1
\end{array} \right)
\qquad \qquad 
K_3 = \left( \begin{array}{rrrr}
1& 1& 1& 1 \\
3& 1& -1& -3 \\
3& -1& -1& 3 \\
1& -1& 1& -1
\end{array} \right) \\
K_4 = 
\left( \begin{array}{rrrrr}
1& 1& 1& 1& 1 \\
4& 2& 0 &-2&-4 \\
6& 0 &-2& 0 & 6 \\
4&-2& 0 & 2&-4 \\
1&-1& 1&-1& 1
\end{array} \right) 
\qquad \qquad 
K_5 = 
\left( \begin{array}{rrrrrr}
1& 1& 1& 1& 1& 1 \\
5& 3& 1&-1&-3&-5 \\
10& 2&-2& -2&2& 10 \\
10& -2&-2& 2&2& -10 \\
5& -3& 1&1&-3&5 \\
1&-1& 1&-1& 1&-1
\end{array} \right)
\end{gather*}}
{\tiny
\begin{gather*}
K_6 = 
\left( \begin{array}{rrrrrrr}
1& 1& 1& 1& 1 &1 &1 \\
6& 4& 2& 0 &-2&-4 &-6 \\
15 & 5 & -1 & -3 & -1 & 5 & 15 \\
20& 0 & -4& 0 & 4& 0 & -20 \\
15 & -5 & -1 & 3 & -1 & -5 & 15 \\
6& -4& 2& 0 & 2 & 4 &-6 \\
1& -1& 1& -1& 1 &-1 &1
\end{array} \right) 
\qquad K_7 = 
\left( \begin{array}{rrrrrrrr}
1& 1& 1& 1& 1 &1 &1 &1 \\
7& 5& 3& 1&-1&-3 &-5&-7 \\
21 & 9 & 1 & -3 & -3 & 1 & 9 & 21 \\
35& 5& -5& -3&3& 5&-5 & -35 \\
35& -5& -5& 3&3& -5&-5 & 35 \\
21 & -9 & 1 & 3 & -3 &-1 & 9 &-21 \\
7& -5& 3& -1&-1&3 &-5&7 \\
1& -1& 1& -1& 1 &-1 &1 &-1
\end{array} \right) 
\\ 
K_8 = 
\left( \begin{array}{rrrrrrrrr}
1& 1& 1& 1& 1& 1& 1 &1 &1 \\
8& 6& 4& 2& 0 &-2&-4 &-6 & -8\\
28 & 14& 4 & -2 & -4 & -2 & 4 & 14 & 28\\
56 & 14& -4 & -6 & 0 & 6 & 4 & -14 & -56\\
70 & 0 & -10 & 0 & 6& 0 &-10& 0 & 70 \\
56 & -14& -4 & 6 & 0 & -6 & 4 & 14 & -56\\
28 & -14& 4 & 2 & -4 & 2 & 4 & -14 & 28\\
8& -6& 4& -2& 0 & 2 & -4 & 6 & -8\\
1& -1& 1& -1& 1& -1& 1 &-1 &1
\end{array} \right) 
\end{gather*}}
\hrule \hrule
\label{tablitas}
\end{table}

\sk
Note that the sum of the even rows (i.e., the odd numbered ones) vanish; that is $\sum_{j=0}^n K_i^n(j)=0$ for $i$ odd. This is a general fact 
proved in \cite{ChS}, see (5.1) -- (5.5). On the other hand, expression \eqref{pr3} accounts for the vanishing of the sum of the columns 
(even or odd, except for the first one).

Observe that in the even matrices $K_2$, $K_4$, $K_6$, $K_8$, there are several entries divisible by $2$, $4$ and $8$. 
In the last two sections we will show that this is indeed a general phenomenon.

\begin{ejem}
Here we compute the single value $K_2^8(4)$ using 3 different expressions: 
the definition \eqref{eq. kraws} and Theorems \ref{teo1} and \ref{teo2}. 
We have 
\begin{align*} 
& K_2^8(4) = \tbinom 40 \tbinom 42 - \tbinom 41 \tbinom 41 + \tbinom 42 \tbinom 40 = 6-16+6=-4,    \\
& K_2^8(4) = 2^0\tbinom 41 K_0^4(2) + 2^2 \tbinom 20 K_2^4(2) = 4+4(-2)=-4,  \\ 
& K_2^8(4) = \tbinom 41 \tbinom 20 K_0^2(1) +  2^2\tbinom 20 \tbinom 21 K_0^2(1) + 
2^4 \tbinom 20 \tbinom 00 K_2^2(1) = 4+8-16=-4, 
\end{align*}
by \eqref{eq. kraws}, \eqref{teo1} and \eqref{teo2}, respectively.
\end{ejem}

\begin{ejem}
We will compute the values $K_4^8(2j)$, $0 \le j \le 4$. 
By \eqref{k0k1}, 
we have that $K_4^8(0) = K_4^8(8) = \tbinom 84 = 70$. 
By Theorem \ref{teo1}, for $j=1,3$ we have
$$K_4^8(2j) = \sum_{\ell=0,2,4} 2^\ell \tbinom{4-\ell}{\f{4-\ell}{2}} K_\ell^4(j) = 
\tbinom 42 K_0^4(j) + 2^2 \tbinom 21 K_2^4(j) + 2^4 \tbinom 00 K_4^4(j) = -10 ,$$ 
where we have used the second and fourth columns of $K_4$ in Table \ref{tablitas}.

Let us now compute $K_4^8(4)$ with Theorem \ref{teo2}. By taking $m=2$ and $j=1$ in \eqref{iter1} we have
$$K_4^8(4) = \sum_{\substack{0\le k\le \ell \le 4 \sk \\ k, \ell \text{ even}}} 2^{k+\ell} \tbinom{4-\ell}{\f{4-\ell}{2}} 
\tbinom{2-k}{\f{\ell-k}{2}} \; K_k^2(1).$$
We only have to sum over the pairs $(k,\ell)$ of the form $(0,0)$, $(0,2)$, $(2,2)$ and $(0,4)$
since $(2,4)$ and $(4,4)$ do not contribute to the sum because of the appearance of $\tbinom 01=0$ and 
$K_4^2(1)=0$ in the corresponding terms. Thus, we get
\begin{eqnarray*}
K_4^8(4) &=& \tbinom 42 \tbinom 20 K_0^2(1) + 2^2 \tbinom 21 \tbinom 21 K_0^2(1) +
2^4 \tbinom 21 \tbinom 00  K_2^2(1) + 2^4 \tbinom 00 \tbinom 22 K_0^2(1)  \\ 
&=& (6+16+16) K_0^2(1) + 32 K_2^2(1) = 38 - 32 = 6 \, 
\end{eqnarray*}
where we have used (the second column of) $K_2$ in this case.

The computations we have obtained for $K_4^8(2j)$ are in coincidence with the values in (the fifth row of) $K_8$. 
Similarly, one can compute $K_p^8(2j)$ for any $0\le p\le 8$, $p\ne 4$. 
\end{ejem}

\sk
We now present a more involved example to show how Theorem \ref{teo2} really works.

\begin{ejem}
Let us compute $K_6^{48}(40)=K_6^{2^4 3}(2^3 5)$. So $m=3$, $r=4$ and $s=3$. 
Thus, $\nu=3$, and by \eqref{Kraw3c} we have 
\begin{equation} \label{ej48}
K_p^{48}(40) = \sum_{\substack{0\le p_3 \le p_2 \le p_1 \le 6 \sk \\  
p_3, p_2, p_1 \text{ even}}}  \, 
2^{p_1 + p_2 + p_3} \cdot \pi(p_3,p_2,p_1)  \cdot K_{p_3}^{6}(5) 
\end{equation}
since $f(3,4)=1$ and $f(4,3)=0$,
where we have used the notation
\begin{equation} \label{pi321} 
\pi(p_3,p_2,p_1) := \prod_{k=1}^{\nu} \tbinom{2^{4-k} 3 - p_k}{\f{p_{k-1}-p_k}{2}}
= \tbinom{24 - p_1}{\f{6-p_1}{2}} \tbinom{12-p_2}{\f{p_{1}-p_2}{2}} 
\tbinom{6 - p_3}{\f{p_{2}-p_3}{2}}.
\end{equation} 
In this way, $K_6^{48}(40)$ is only expressed in terms of $K_p^6(5)$ with $p$ even, which we know have the values $K_0^6(5)=K_6^6(5)=1$, 
$K_2^6(5)=2$ and $K_4^6(5)=-5$, by Table~1.

The 20 allowed triplets in \eqref{ej48} are 
$(0,0,0)$, $(0,0,2)$, $(0,0,4)$, $(0,2,2)$, $(0,0,6)$, $(0,2,4)$, $(2,2,2)$, $(0,2,6)$, $(0,4,4)$, $(2,2,4)$, $(0,4,6)$, $(2,2,6)$, $(2,4,4)$, $(0,6,6)$, $(2,4,6)$, $(4,4,4)$, $(2,6,6)$, $(4,4,6)$ and $(6,6,6)$.
Therefore, we have
\begin{multline*}
K_6^{48}(40) = \pi(0,0,0) + 2^2 \pi(0,0,2) + 2^4 \big(\pi(0,0,4)+\pi(0,2,2)\big) \\ 
              + 2^6 \big(\pi(0,0,6)+\pi(0,2,4)\big) + 2^8 \big(\pi(0,2,6)+\pi(0,4,4)\big) \\ 
							+ 2^{10} \pi(0,4,6) + 2^{12} \pi(0,6,6) + 2^{18} \pi(6,6,6) \\ 
							+ 5 \Big\{ 2^6 \pi(2,2,2) + 2^8  \pi(2,2,4) + 2^{10} \big(\pi(2,2,6) + \pi(2,4,4) \big) \\ 
							+ 2^{12} \big( \pi(2,4,6) - \pi(4,4,4) \big) 	- 2^{14} \pi(4,4,6) - 2^{16} \pi(4,6,6) \Big\}
\end{multline*}
where all the $\pi(p_3,p_2,p_1)$ can be easily computed by using \eqref{pi321}.
\end{ejem}

\section{Applications to binomial coefficients}
As a direct consequence of Theorem \ref{teo2} we get basic identities between binomial coefficients, 
expressing $\binom{2^rm}k$ in terms of numbers $\binom{2^tm}j$ with $t<r$ for some $j$'s. 
In fact, under the assumptions of Theorem \ref{teo2}, taking $j=0$ in \eqref{Kraw3c} and using that $K_p^n(0) = \binom np$, 
for any $1\le s,r$ we have
\begin{equation} \label{comb3}
\tbinom{2^r m}{p} = 
\sum_{\substack{0\le p_\nu \le \cdots \le  p_2 \le p_1 \le p \sk \\  p_\nu 
\equiv \cdots \equiv p_1\equiv p \!\!\!\pmod 2}}  \, 2^{p_1 + \cdots + p_\nu}
\left( \prod_{k=1}^{\nu} \tbinom{2^{r-k} m - p_k}{\f{p_{k-1}-p_k}{2}}  \right) \: 
\tbinom{2^{f(s,r)}m}{p_\nu} 
\end{equation}
with $\nu =\min\{r,s\}$, where $\tbinom{2^{f(s,r)}m}{p_\nu}$ equals $\tbinom{2^{r-s}m}{p_\nu}$ or $\tbinom{m}{p_\nu}$ 
depending on whether $s< r$ or $s \ge r$. 
A much simpler expression is obtained when $s=1$, namely 
\begin{equation} \label{comb4}
\tbinom{2^r m}{p} = \sum_{\substack{0\le \ell \le p \sk \\ \ell \equiv p \, (2)}}  
\, 2^\ell \tbinom{2^{r-1}m - \ell}{\tf{p-\ell}{2}}  \:   \tbinom{2^{r-1}m}{\ell}.
\end{equation}
For the simplest case, i.e.\@ $r=s=1$, we can give more explicit expressions.

\begin{lema} \label{prop1}
For any integers $0\le q \le m$ we have
\begin{equation} \label{comb1}
\begin{split} 
\tbinom{2m}{2q} & = \sum_{j=0}^q 4^j \, \tbinom{m-2j}{q-j} \tbinom{m}{2j} 
= \sum\limits_{j=0}^q 4^j \, \tbinom{m}{q+j} \tbinom{q+j}{2j}, \bsk \\
\tbinom{2m}{2q+1} & = 2\sum_{j=0}^q 4^j \, \tbinom{m-2j-1}{q-j} \tbinom{m}{2j+1} = 
2\sum\limits_{j=0}^q 4^j \, \tbinom{m}{q+j+1} \tbinom{q+j+1}{2j+1}, 
\end{split}
\end{equation} 
where $q<m$ in the second identity.
\end{lema}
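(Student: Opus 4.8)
The statement packages two kinds of assertions: the leftmost equalities, which recover the binomial recursion already encoded in the $r=s=1$ case, and the rightmost equalities, which re-index the same sums. The plan is to dispatch these separately, since they are of genuinely different natures.

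For the leftmost equalities I would simply specialize the reduction formula at $j=0$. Setting $j=0$ in \eqref{Kraw1b} and using $K_p^n(0)=\tbinom np$ from \eqref{k0k1} converts
$$K_{2q}^{2m}(2j)=\sum_{k=0}^q 4^k\,\tbinom{m-2k}{q-k}\,K_{2k}^m(j)$$
into $\tbinom{2m}{2q}=\sum_{j=0}^q 4^j\,\tbinom{m-2j}{q-j}\,\tbinom{m}{2j}$, and the odd line of \eqref{Kraw1b} likewise yields $\tbinom{2m}{2q+1}=2\sum_{j=0}^q 4^j\,\tbinom{m-2j-1}{q-j}\,\tbinom{m}{2j+1}$. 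This is exactly \eqref{comb4} with $r=1$, so the first identity in each line requires nothing new.

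For the rightmost equalities I would show that the two sums agree \emph{term by term}, reducing everything to the subset-of-a-subset (trinomial revision) identity. In the even case, for $0\le j\le q\le m$,
$$\tbinom{m}{2j}\,\tbinom{m-2j}{q-j}=\frac{m!}{(2j)!\,(q-j)!\,(m-q-j)!}=\tbinom{m}{q+j}\,\tbinom{q+j}{2j},$$
where the last equality uses $q+j-2j=q-j$; multiplying by $4^j$ and summing over $j$ gives the even identity. The odd case is identical after a shift of indices: for $0\le j\le q<m$,
$$\tbinom{m}{2j+1}\,\tbinom{m-2j-1}{q-j}=\frac{m!}{(2j+1)!\,(q-j)!\,(m-q-j-1)!}=\tbinom{m}{q+j+1}\,\tbinom{q+j+1}{2j+1},$$
using $q+j+1-(2j+1)=q-j$, and multiplying by $2\cdot 4^j$ and summing gives the odd identity.

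I expect no genuine obstacle here: the whole content lies in recognizing the rightmost sums as re-indexings of the leftmost ones through the standard factorization of the trinomial coefficient. The only real care needed is bookkeeping on the ranges — the factorials above are factorials of nonnegative integers precisely when $q\le m$ (so that $m-q-j\ge 0$) in the even case and $q<m$ (so that $m-q-j-1\ge 0$) in the odd case, matching the stated hypotheses; any terms falling outside these ranges vanish because the corresponding binomial coefficients are zero.
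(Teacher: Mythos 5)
Your proposal is correct and follows essentially the same route as the paper: the leftmost equalities come from setting $j=0$ in \eqref{Kraw1b} (equivalently $r=1$ in \eqref{comb4}) with $K_p^n(0)=\tbinom np$, and the rightmost ones from the trinomial revision $\tbinom rs\tbinom st=\tbinom rt\tbinom{r-t}{s-t}$, which is exactly the identity the paper invokes with $r=m$, $s=q+j$ (resp.\@ $q+j+1$), $t=2j$ (resp.\@ $2j+1$) --- you merely verify it by expanding factorials rather than citing it. Your extra care about the ranges (terms with $2j>m$ or $q+j>m$ vanishing) is a harmless refinement the paper leaves implicit.
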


\begin{proof}
By taking $j=0$ in \eqref{Kraw1b}, or taking $r=1$ in \eqref{comb4}, and considering the cases $p=2q$ and $p=2q+1$, 
we get the first equalities in each of the expressions in \eqref{comb1}.
To see the remaining equalities, notice that 
$$\tbinom{m-2j}{q-j} \tbinom{m}{2j} = \tbinom{m}{2j} \tbinom{m-2j}{q+j-2j} = \tbinom{m}{q+j} \tbinom{q+j}{2j},$$
where in the second equality we have applied the relation 
$\tbinom rs \tbinom st = \tbinom rt \tbinom{r-t}{s-t}$ with $r=m$, 
$s=q+j$ and $t=2j$. Thus, we get the second equality in the first row of \eqref{comb1}. Proceeding similarly for the odd case, one gets the desired expression in the statement.
\end{proof}

\begin{rem}
By Pascal's identity, one can get similar formulas for $\binom{2m+1}{2q+1}$ and $\binom{2m+1}{2q}$ as in Lemma~\ref{prop1}, 
by combining the expressions in \eqref{comb1}. 
\end{rem}

\subsection{Recursions}
Next, we will give alternative expressions for $\tbinom{2m}{2q}$, $\tbinom{2m}{2q+1}$, $\tbinom{2m+1}{2q}$ and $\tbinom{2m+1}{2q+1}$ in terms of 
$\tbinom mq$. We will need to make use of the \textit{double factorial} of $n$
$$n!! = n(n-2)(n-4) \cdots $$ 
i.e.\@ $n!!=\prod_{k=0}^{m} (n-2k)$ with $m = \lceil \tfrac n2 \rceil -1$, and the \textit{Pochhammer symbol} 
$$(n)_j = \prod_{k=0}^{j-1} (n-k) = n(n-1)(n-2)\cdots (n-j+1) $$  
also known as \textit{falling factorial}.

\begin{teo} \label{pochs}
If $q,m \in \N_0$ then we have
\begin{align} 
\label{b1}
\tbinom{2m}{2q} &= 
\tbinom mq  \, \sum_{j=0}^q \tf{2^j}{j!(2j-1)!!} \, (q)_j (m-q)_j, 
\\ 
\label{b2}
\tbinom{2m}{2q+1} &= 
2(m-q) \tbinom{m}{q} \, \sum_{j=0}^q \tf{2^j}{j!(2j+1)!!} \, (q)_j (m-q-1)_j, 
\\
\label{b3}
\tbinom{2m+1}{2q} & = 
(2q+1) \tbinom mq  \, \sum_{j=0}^q \tf{2^j}{j!(2j+1)!!} \, (q)_j (m-q)_j, \\ 
\label{b4}
\tbinom{2m+1}{2q+1} & = 
(2(m-q)+1) \tbinom{m}{q} \, \sum_{j=0}^q \tf{2^j}{j!(2j+1)!!} \, (q)_j (m-q)_j, 
\end{align}
where the sums are in $\Q$.
\end{teo}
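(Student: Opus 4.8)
The plan is to treat the two even-bottom identities \eqref{b1} and \eqref{b2} as direct reformulations of Lemma \ref{prop1}, and then to deduce the two odd-bottom identities \eqref{b3} and \eqref{b4} from them by Pascal's rule together with the symmetry $\binom nk=\binom n{n-k}$, exactly as anticipated in the remark following Lemma \ref{prop1}.

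First I would prove \eqref{b1}. Starting from the first expression in \eqref{comb1}, I rewrite each summand with factorials:
\[
4^j\tbinom{m-2j}{q-j}\tbinom{m}{2j}=\frac{4^j\,m!}{(2j)!\,(q-j)!\,(m-q-j)!}=\tbinom mq\,\frac{4^j}{(2j)!}\,(q)_j\,(m-q)_j,
\]
using $q!/(q-j)!=(q)_j$ and $(m-q)!/(m-q-j)!=(m-q)_j$. Since $(2j)!=(2j)!!\,(2j-1)!!=2^j\,j!\,(2j-1)!!$, one has $4^j/(2j)!=2^j/(j!\,(2j-1)!!)$, which is precisely the coefficient in \eqref{b1}. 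The identical computation applied to the second expression in \eqref{comb1}, now extracting the factor $(m-q)\tbinom mq=m!/(q!\,(m-q-1)!)$ from each term, carrying the prefactor $2$, and using $(2j+1)!=2^j\,j!\,(2j+1)!!$, gives \eqref{b2}.

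Next I would obtain \eqref{b4} from Pascal's rule $\binom{2m+1}{2q+1}=\binom{2m}{2q}+\binom{2m}{2q+1}$ by substituting \eqref{b1} and \eqref{b2}. Rewriting the $(2j-1)!!$ in \eqref{b1} via $(2j+1)!!=(2j+1)(2j-1)!!$ puts all three sums over the common factor $\tbinom mq\,2^j/(j!\,(2j+1)!!)$, so the statement collapses to the term-by-term identity
\[
(2j+1)\,(q)_j(m-q)_j+2(m-q)\,(q)_j(m-q-1)_j=\bigl(2(m-q)+1\bigr)\,(q)_j(m-q)_j.
\]
Cancelling $(q)_j$, this is $2(m-q)(m-q-1)_j=2(m-q-j)(m-q)_j$, which holds because $a\,(a-1)_j=(a-j)\,(a)_j=(a)_{j+1}$ with $a=m-q$. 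Finally, rather than a second Pascal computation, I would get \eqref{b3} from the symmetry $\binom{2m+1}{2q}=\binom{2m+1}{2(m-q)+1}$: applying \eqref{b4} with $q$ replaced by $m-q$ (valid since $0\le m-q\le m$) produces $(2q+1)\tbinom mq\sum_{j=0}^{m-q}\tf{2^j}{j!(2j+1)!!}(m-q)_j(q)_j$, and since $(q)_j$ and $(m-q)_j$ kill every term with $j>\min\{q,m-q\}$, the upper limit may be taken to be $q$, giving exactly \eqref{b3}.

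The whole argument is elementary; the only delicate points are the systematic conversion between ordinary factorials, double factorials and falling factorials (so that the coefficients $2^j/(j!(2j\mp1)!!)$ come out correctly), the term-by-term cancellation in the Pascal step for \eqref{b4}, and the justification that the summation range can be extended to $q$ in the symmetry argument for \eqref{b3}. None of these is a genuine obstacle, but the index shifts in \eqref{b2} (where $(m-q-1)_j$ rather than $(m-q)_j$ appears) require the most care.
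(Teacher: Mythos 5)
Your proposal is correct and takes essentially the same route as the paper: \eqref{b1} and \eqref{b2} are obtained by the same factorial rewriting of \eqref{comb1} via $(2j)!=2^j\,j!\,(2j-1)!!$ and $(2j+1)!=2^j\,j!\,(2j+1)!!$, and the odd-top identities are then deduced from the even ones exactly as the paper does via Pascal's rule. Your only deviation is getting \eqref{b3} from \eqref{b4} by the symmetry $\tbinom{2m+1}{2q}=\tbinom{2m+1}{2(m-q)+1}$ (with the correct observation that $(q)_j$ truncates the extended sum) instead of a second Pascal computation; this, together with your explicit term-by-term identity $(m-q)(m-q-1)_j=(m-q-j)(m-q)_j$, in fact supplies the details the paper leaves as ``tedious but straightforward.''
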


\begin{proof}
Our starting point will be the expressions for $\tbinom{2m}{2q}$ and $\tbinom{2m}{2q+1}$ in \eqref{comb1}. First, notice that 
\begin{equation} \label{2j!}
(2j)! = 2^j \, j! (2j-1)!!
\end{equation} 
Thus, the general term in the first summation for $\tbinom{2m}{2q}$ can be written as follows 
$$4^j \tbinom{m-2j}{q-j} \tbinom{m}{2j} = 4^j \, \tf{(m-2j)!}{(q-j)!(m-q-j)!} \, \tf{m!}{(2j)!(m-2j)!} 
      = \tf{2^j}{j!(2j-1)!!} \, \tf{m!}{(q-j)!(m-q-j)!}$$
and hence, we get 
$$\tbinom{2m}{2q} = m! \sum_{j=0}^q \tf{2^j}{j!(2j-1)!!} \, \big( (q-j)!(m-q-j)! \big)^{-1} .$$
After performing the sum of the fractions involved, we get
\begin{eqnarray*}
\tbinom{2m}{2q} &=& \tf{m!}{q!(m-q)!} \, \sum_{j=0}^q \tf{2^j}{j!(2j-1)!!} \, \Big( \prod_{k=0}^{j-1} (q-k)(m-q-k) \Big) \\
   &=& \tbinom mq \, \sum_{j=0}^q \tf{2^j}{j!(2j-1)!!} \, \Big( \prod_{k=0}^{j-1} (q-k) \Big) \Big( \prod_{k=0}^{j-1} (m-q-k) \Big), 
\end{eqnarray*}
and by using the Pochhammer symbols we finally obtain \eqref{b1}.

For $\tbinom{2m}{2q+1}$ we proceed similarly as before. By \eqref{2j!}, 
\begin{equation} \label{2j+1!}
(2j+1)!=(2j+1)(2j)!= 2^j j! (2j+1)!!
\end{equation}
and hence, from the expression in the second row of \eqref{comb1} we get
\begin{eqnarray*}
\tbinom{2m}{2q+1} &=&  2 m! \sum_{j=0}^q \tf{2^j}{j!(2j+1)!!} \, \big((q-j)!(m-q-1-j)!\big)^{-1} \\
  &=& \tf{2m!}{q!(m-1-q)!} \, \sum_{j=0}^q \tf{2^j}{j!(2j+1)!!} \, \Big( \prod_{k=0}^{j-1} (q-k) \Big) \Big( \prod_{k=0}^{j-1} (m-q-1-k) \Big) 
\end{eqnarray*}
from which \eqref{b2} readily follows.

Finally, since $\binom{2m+1}{2q} = \binom{2m}{2q-1} + \binom{2m+1}{2q}$ and $\binom{2m+1}{2q+1} = \binom{2m}{2q} + \binom{2m}{2q+1}$, expressions \eqref{b3} and \eqref{b4} follow directly from \eqref{b1} and \eqref{b2}, after some tedious but straightforward computations. 
\end{proof}

\begin{rem}
(i) Theorem \ref{pochs} gives expressions for $\tbinom{2m+\varepsilon}{2q+\varepsilon'} / \tbinom mq$, 
with $\varepsilon, \varepsilon' \in \{0,1\}$. 

(ii) It is known that  
$(q)_j = \sum_{i=0}^j (-1)^{j-i} \, s(j,i) \, q^i$.
Hence, the Theorem \ref{pochs} relates binomial coefficients with Stirling numbers of the first kind 
(see A048994 in \cite{OEIS}). 
For instance, \eqref{b1} takes the form
\begin{equation} \label{stirling}
\tbinom{2m}{2q} = \tbinom mq  \, \sum_{j=0}^q \tf{2^j}{j!(2j-1)!!} \, \sum_{k,l=0}^j (-1)^{k+l} \, s(j,k) s(j,l) \, q^k(m-q)^l. 
\end{equation}
\end{rem}

Theorem \ref{pochs} implies an expression for any product of $m-q$ consecutive odd 
(resp.\@ even) positive numbers in terms of fractions of factors of small order. 
\begin{coro} \label{prodimp}
For $q\le m-1$, with the convention $(-1)!!=1$, the product of $m-q$ consecutive odd numbers is 
$$N:=\prod_{j=q}^{m-1} (2j+1) = (2(m-q)-1)!! \sum_{j=0}^q \f{2^j}{j!(2j-1)!!} \, (q)_j (m-q)_j$$
and hence the product of $m-q$ consecutive even numbers is  
$$M:=\prod_{j=q}^{m-1} (2j) = \tfrac{(2m-1)!}{(2q-1)!N}.$$
\end{coro}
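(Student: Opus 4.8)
The plan is to recognize that the sum in the statement is precisely the one occurring in identity \eqref{b1} of Theorem~\ref{pochs}. Since that identity reads $\tbinom{2m}{2q} = \tbinom mq \sum_{j=0}^q \tf{2^j}{j!(2j-1)!!}(q)_j(m-q)_j$, the sum $\sum_{j=0}^q \tf{2^j}{j!(2j-1)!!}(q)_j(m-q)_j$ equals $\tbinom{2m}{2q}/\tbinom mq$. Thus the first claim reduces to the purely factorial identity
$$\f{\tbinom{2m}{2q}}{\tbinom mq} = \f{N}{(2(m-q)-1)!!},$$
with no reference to Krawtchouk polynomials remaining. This is the conceptual step; everything after it is bookkeeping.

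To verify that identity I would expand both binomial coefficients into factorials and apply the double-factorial relation $(2n)! = 2^n\, n!\,(2n-1)!!$ (this is \eqref{2j!}) to each of $(2m)!$, $(2q)!$ and $(2m-2q)!=(2(m-q))!$. The three powers of $2$ combine to $2^m/(2^q 2^{m-q})=1$ and the ordinary factorials $m!$, $q!$, $(m-q)!$ cancel, leaving
$$\f{\tbinom{2m}{2q}}{\tbinom mq} = \f{(2m-1)!!}{(2q-1)!!\,(2(m-q)-1)!!}.$$
Since $(2m-1)!!/(2q-1)!! = (2q+1)(2q+3)\cdots(2m-1) = \prod_{j=q}^{m-1}(2j+1) = N$, the first formula follows; the stated convention $(-1)!!=1$ is what makes the boundary case $q=0$, where $(2q-1)!!=(-1)!!$, come out correctly.

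For the product $M$ of consecutive even numbers I would argue directly on the factorial ratio $(2m-1)!/(2q-1)! = \prod_{k=2q}^{2m-1}k$. This is a product of $2(m-q)$ consecutive integers which splits according to parity: the even factors are $2q,2q+2,\ldots,2m-2$, whose product is $M$, and the odd factors are $2q+1,2q+3,\ldots,2m-1$, whose product is $N$. Hence $MN = (2m-1)!/(2q-1)!$, and solving for $M$ gives the asserted formula. There is no genuine obstacle here: the only care required is matching the endpoints of the various products and tracking the double-factorial conventions at the boundary, which is routine once the reduction in the first paragraph is made.
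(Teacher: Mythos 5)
Your proposal is correct and follows essentially the same route as the paper's proof: both use the relation $(2n)!=2^n\,n!\,(2n-1)!!$ from \eqref{2j!} to establish $\tbinom{2m}{2q}=\tbinom mq\,\tfrac{(2m-1)!!}{(2q-1)!!\,(2(m-q)-1)!!}$, compare this with \eqref{b1} to extract $N$, and obtain $M$ from $MN=\tfrac{(2m-1)!}{(2q-1)!}$. Your parity-splitting justification of that last identity is slightly more explicit than the paper's, but the argument is identical in substance.
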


\begin{proof}
First note that, by \eqref{2j!}, we have 
$$\tbinom{2m}{2q} = \tf{(2m)!}{(2q)!(2(m-q))!} = \tf{2^m m! (2m-1)!!}{2^q q!(2q-1)!! 2^{m-q} (m-q)! (2(m-q)-1)!!\, } = 
\tbinom mq \, \tf{(2m-1)!!}{(2(m-q)-1)!!\, (2q-1)!!}.$$  
Comparing this with \eqref{b1} we have 
$$\tf{(2m-1)(2m-3) \cdots (2q+1)}{(2(m-q)-1)!!} = \sum_{j=0}^q \tf{2^j}{j!(2j-1)!!} \, (q)_j (m-q)_j$$
from which the expression for $N$ follows. Since $MN=\tfrac{(2m-1)!}{(2q-1)!}$ we are done.  
\end{proof}

\begin{ejem}
We want to compute $N=13\cdot 15 \cdot 17\cdot 19\cdot 21$. Hence $q=6$, $m=11$ and $m-q=5$. 
By Corollary \ref{prodimp} we have $N= 9!! \sum_{j=0}^5 \tf{2^j}{j! (2j-1)!!} \, (6)_j (5)_j$.
Thus,
\begin{multline*} 
N= 9!! \big(1+2\cdot 6\cdot 5 + \tf{4}{2\cdot 3!!} (6\cdot 5) (5\cdot 4) + \tf{8}{3!5!!} 
(6\cdot 5\cdot 4) (5\cdot 4\cdot 3) 
\\ + \tf{16}{4!7!!} (6\cdot 5\cdot 4\cdot 3) (5\cdot 4\cdot 3\cdot 2) + \tf{32}{5!9!!} 
(6\cdot 5\cdot 4\cdot 3\cdot 2) (5\cdot 4\cdot 3\cdot 2\cdot 1)  \big)
\end{multline*}
and after some easy calculations we get
$$N= 9\cdot 7\cdot 5\cdot 3 \cdot (1+60+400+640) + 9\cdot 5\cdot 3 \cdot 1920 + 32\cdot 720 = 
1{.}322{.}685.$$	
Thus, we can also compute $M=12\cdot 14\cdot 16\cdot 18 \cdot 20$ by doing $M=\f{21!}{11!N} = 967{.}680$.
\end{ejem}

\subsection{Congruences mod powers of 2}
Using the previous results we will obtain the values of $\tbinom{2^r m}{2^rq}$ and $\tbinom{2^r m}{2^r q+1}$ modulo $2^t$, 
for any $r$ and small values of $t$.

\sk
We have the equalities 
\begin{equation} \label{bino2} 
\begin{split}
\tbinom{2m}{2q}   & = \tbinom mq \tf{(2m-1)!!}{(2q-1)!!(2(m-q)-1)!!} \\ 
\tbinom{2m}{2q+1} & = 2(m-q)\tbinom mq \tf{(2m-1)!!}{(2q+1)!!(2(m-q)-1)!!}. 
\end{split}
\end{equation}
Since all the double factorials above are odd, we deduce that 
\begin{equation} \label{bino1} 
\tbinom{2m}{2q}\equiv \tbinom mq \pmod 2  
\qquad \quad \text{and} \qquad \quad 
\tbinom{2m}{2q+1}\equiv 0 \pmod 2.
\end{equation} 
Actually, it is well known that $\tbinom{n}{k} \equiv 0$ mod 2 for $n$ even and $k$ odd, and 
$\tbinom{n}{k} \equiv \tbinom{\lfloor n/2 \rfloor}{\lfloor k/2 \rfloor}$ mod 2 otherwise 
(hence by taking $n=2m$ and $k=2q, 2q+1$ one recovers \eqref{bino1}).

Obviously, we have $\tbinom{2^rm}{2^sq+1}\equiv 0$ mod $2$ for every $s\ge 0$.
However, by iterating \eqref{bino1}, we deduce that for any $r\in \N$ we have 
\begin{equation} \label{bino1b} 
\tbinom{2^r m}{2^rq}\equiv \tbinom mq \pmod 2.  
\end{equation} 

As a corollary to Theorem \ref{pochs} we have the following result improving \eqref{bino1b}. 
\begin{prop} \label{prop congs}
Let $m,q \in \N_0$. 

\sk \textit{(a)} For any $r\ge 1$ we have
$$\tbinom{2^r m}{2^r q} \equiv \begin{cases} 
\tbinom mq  & \quad \pmod{2, 4}, \sk \\
\tbinom mq (1+2q(m-q)) & \quad \pmod{8}, \!\!\!\!\! \pmod{16} \text{ with } r=1, \sk \\
\tbinom mq (1+10q(m-q)) & \quad \pmod{16}, r\ge 2.
\end{cases}$$

\sk \textit{(b)} For $1\le r \le 3$ we have
$$\tbinom{2^r m}{2^r q+1}  \equiv 
\begin{cases} 
0   & \quad \pmod{2^r}, \sk \\
2^r(m-q) \tbinom mq  & \quad \pmod{2^{r+1}}. 
\end{cases}$$
\end{prop}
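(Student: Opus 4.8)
The plan is to derive both parts from closed forms already in hand: for part (a) the integer expansion \eqref{comb1}, and for part (b) the double-factorial expression \eqref{bino2}. The one useful device is to reduce modulo a power of $2$ inside the ring of $2$-adic integers $\Z_2$, where every ratio of odd numbers is a unit; this lets one divide congruences by powers of $2$ and replace odd factors by $1$ without fuss. This device is essential only for part (b), since the identities used in part (a) are congruences between honest integers.

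\textbf{Part (a).} I first treat the base case $r=1$. From the integer identity $\tbinom{2M}{2Q}=\sum_{j=0}^{Q}4^j\tbinom{M-2j}{Q-j}\tbinom{M}{2j}$ (this is \eqref{comb1}), and since $4\mid 4^j$ for $j\ge 1$ while $16\mid 4^j$ for $j\ge 2$, modulo $2$ and $4$ only the term $j=0$ survives, and modulo $8$ and $16$ only the terms $j=0,1$ survive. Using $4\tbinom{M-2}{Q-1}\tbinom{M}{2}=2Q(M-Q)\tbinom{M}{Q}$ (the $j=1$ instance of the term identity established inside the proof of Theorem~\ref{pochs}), this yields
$$\tbinom{2M}{2Q}\equiv\tbinom{M}{Q}\pmod{4}, \qquad \tbinom{2M}{2Q}\equiv\tbinom{M}{Q}\bigl(1+2Q(M-Q)\bigr)\pmod{16}.$$
The first congruence telescopes at once, after replacing $(M,Q)$ by $(2^{i-1}m,2^{i-1}q)$ and multiplying over $i=1,\dots,r$, to $\tbinom{2^rm}{2^rq}\equiv\tbinom{m}{q}\pmod 4$, which also settles the mod $2$ claim. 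For the second, writing $P=q(m-q)$ and using $2^{i-1}q\,(2^{i-1}m-2^{i-1}q)=2^{2i-2}P$, the same iteration gives
$$\tbinom{2^rm}{2^rq}\equiv\tbinom{m}{q}\prod_{i=1}^{r}\bigl(1+2^{2i-1}P\bigr)\pmod{16}.$$
Modulo $8$ every factor with $i\ge 2$ is $\equiv 1$, so the product is $\equiv 1+2P$ for all $r$; modulo $16$ the factors with $i\ge 3$ are $\equiv 1$ and $(1+2P)(1+8P)\equiv 1+10P$, so the product is $\equiv 1+2P$ when $r=1$ and $\equiv 1+10P$ when $r\ge 2$. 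These are exactly the four asserted congruences.

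\textbf{Part (b).} Substituting $M=2^{r-1}m$, $Q=2^{r-1}q$ in \eqref{bino2} gives
$$\tbinom{2^r m}{2^r q+1}=2^r(m-q)\,\tbinom{2^{r-1}m}{2^{r-1}q}\,U, \qquad U=\tf{(2^rm-1)!!}{(2^rq+1)!!\,(2^r(m-q)-1)!!}.$$
Since $U$ is a quotient of odd double factorials it is a unit in $\Z_2$, so the right-hand side has $2$-adic valuation at least $r$; this is the congruence modulo $2^r$. For the sharper statement modulo $2^{r+1}$ I divide by $2^r$ and reduce the cofactor modulo $2$: as $U\equiv 1$ and $\tbinom{2^{r-1}m}{2^{r-1}q}\equiv\tbinom{m}{q}\pmod 2$ by \eqref{bino1b}, the cofactor is $\equiv(m-q)\tbinom{m}{q}\pmod 2$, and multiplying back by $2^r$ yields $\tbinom{2^rm}{2^rq+1}\equiv 2^r(m-q)\tbinom{m}{q}\pmod{2^{r+1}}$. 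The argument is insensitive to the bound $r\le 3$, so it covers the stated range a fortiori.

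The step needing the most care is the bookkeeping of the telescoped product in part (a): one must check that only the factors $i=1,2$ can contribute modulo $16$ and that the single cross term survives correctly, namely $(1+2P)(1+8P)\equiv 1+10P$ with the $P^2$-term killed by $16\mid 16P^2$. Once the $2$-adic reduction is in place, everything else is a direct valuation count and a finite expansion.
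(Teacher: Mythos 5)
Your proposal is correct. Part (a) is essentially the paper's own argument: the paper likewise expands \eqref{comb1}, keeps the $j=0,1$ terms modulo $16$ using the identity $4\tbinom{m-2}{q-1}\tbinom m2 = 2q(m-q)\tbinom mq$, and iterates the resulting congruence $\tbinom{2^rm}{2^rq}\equiv\tbinom{2^{r-1}m}{2^{r-1}q}\bigl(1+2^{2r-1}q(m-q)\bigr)$; your telescoped product $\prod_{i=1}^r(1+2^{2i-1}P)$ is just a cleaner packaging of that induction, with the same cross-term computation $(1+2P)(1+8P)\equiv 1+10P \pmod{16}$. Part (b), however, takes a genuinely different route. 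The paper deduces (b) by iterating the mod $4$ and mod $8$ congruences \eqref{bino4} for the odd-index binomial through $r=2$ and $r=3$, which is why its statement carries the restriction $1\le r\le 3$: each further doubling would require control of \eqref{comb1} modulo a higher power of $2$. You instead substitute $(M,Q)=(2^{r-1}m,2^{r-1}q)$ into the exact factorization \eqref{bino2}, observe that the double-factorial ratio $U$ is a unit $\equiv 1$ in $\Z_2$, and reduce the cofactor mod $2$ via \eqref{bino1b}. This is closer in spirit to the paper's valuation-theoretic proof of Proposition \ref{mejora} than to its proof of this proposition, and it buys something real: a uniform one-step argument valid for \emph{all} $r\ge 1$, showing the bound $r\le 3$ is an artifact of the paper's method rather than a genuine obstruction (consistent with the vanishing mod $2^{\varepsilon(m,q)+r}$ in \eqref{eq mejora}). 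Your $2$-adic bookkeeping is sound at the one delicate point: the cofactor $(m-q)\tbinom{2^{r-1}m}{2^{r-1}q}U$ need not be a rational integer, but the difference $\tbinom{2^rm}{2^rq+1}-2^r(m-q)\tbinom mq$ lies in $2^{r+1}\Z_2\cap\Z=2^{r+1}\Z$, which is exactly what the stated congruence requires.
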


\begin{proof}
(a) By expanding the expressions in \eqref{comb1} for $m,q\in \N_0$ we have
\begin{align*}
\tbinom{2m}{2q} & = \tbinom{m}{q} + 4 \tbinom{m-2}{q-1} \tbinom{m}{2} + 16 \tbinom{m-4}{q-2} \tbinom{m}{4} + 
\text{terms divisible by $64$}, \\ 
\tbinom{2m}{2q+1} & = 2 m\tbinom{m-1}{q}+ 8 \tbinom{m-3}{q-1} \tbinom{m}{3} + \text{terms divisible by $32$}.
\end{align*}
From this, since $4 \tbinom{m-2}{q-1} \tbinom{m}{2} = 2q(m-q) \tbinom mq$ and $2m \tbinom{m-1}{q} = 2(m-q) \tbinom mq$, it follows that 
\begin{alignat}{2} \label{bino4} 
\begin{split}
\tbinom{2m}{2q} & \equiv \begin{cases} 
\tbinom mq  & \qquad \pmod{ 2, 4}, \sk \\
\tbinom mq (1+2q(m-q)) & \qquad \pmod{ 8, 16},
\end{cases} \\ 
\tbinom{2m}{2q+1} & \equiv 
\begin{cases} 
0   & \qquad \qquad \quad \pmod{ 2}, \\
2(m-q) \tbinom mq  & \qquad \qquad \quad \pmod{4, 8},
\end{cases}
\end{split}
\end{alignat} 
which improves \eqref{bino1}.

The congruences for $\tbinom{2^r m}{2^r q}$ and $\tbinom{2^r m}{2^r q+1}$ in the statement will follow directly from induction on $r$, 
\eqref{bino4} being the inicial step. 
It is clear that $\tbinom{2^rm}{2^rq}\equiv \tbinom mq$ mod $4$ and $\tbinom{2^rm}{2^rq+1} \equiv 0$ mod $2$, for any $r$. 
By using \eqref{bino4} twice we have 
$$\tbinom{4m}{4q} \equiv \tbinom{2m}{2q}(1+8q(m-q)) \equiv \tbinom{2m}{2q} \equiv \tbinom{m}{q}(1+2q(m-q)) \pmod 8,$$
and hence, by induction, for any $r\ge 2$ we have 
$$\tbinom{2^rm}{2^rq} \equiv \tbinom{2^{r-1}m}{2^{r-1}q}(1+2^{2r-1}q(m-q)) \equiv \tbinom{2^{r-1}m}{2^{r-1}q} 
\equiv \tbinom{m}{q}(1+2q(m-q)) \pmod 8.$$

For modulo 16 we proceed similarly, but now 
$$\tbinom{4m}{4q} \equiv \tbinom{2m}{2q}(1+8q(m-q)) \equiv \tbinom{m}{q}(1+2q(m-q))(1+8q(m-q)) \pmod{16}$$ 
and hence
$\tbinom{4m}{4q} \equiv \tbinom{m}{q}(1+10q(m-q))$ mod $16$.
Thus, for any $r\ge 3$ we have
$$\tbinom{2^rm}{2^rq} \equiv \tbinom{2^{r-1}m}{2^{r-1}q}(1+2^{2r-1}q(m-q)) \equiv \tbinom{2^{r-1}m}{2^{r-1}q} \equiv 
\tbinom{m}{q}(1+10q(m-q)) \pmod{16}.$$

\sk (b) By \eqref{bino4}, we have 
$\tbinom{4m}{4q+1} \equiv 4(m-q) \tbinom{2m}{2q}$ mod $4, 8$, and hence
$\tbinom{4m}{4q+1} \equiv  0 \text{ mod } 4$ and 
$\tbinom{4m}{4q+1} \equiv 4(m-q)(1+2q(m-q))\tbinom mq \equiv 4(m-q) \tbinom mq$ mod $8$.
Similarly, one proves that $\tbinom{8m}{8q+1} \equiv 0$ mod $8$ and 
$\tbinom{8m}{8q+1} \equiv 8(m-q)\tbinom mq$ mod $16$, 
and we are done.
\end{proof}

\begin{ejem}
Consider $\tbinom{48}{16} = 2{.}254{.}848{.}913{.}647$ and $\tbinom{56}{17} = 97{.}997{.}533{.}741{.}800$. 
Since $\tbinom{48}{16} = \tbinom{2^4 \cdot 3}{2^4 \cdot 1}$, by Proposition \ref{prop congs}, with $m=3$ and $q=1$, we have 
$\tbinom{48}{16} \equiv \tbinom 31 =3$ mod $4$, $\tbinom{48}{16} \equiv 3 (1+4) \equiv 7$ mod $8$ 
and $\tbinom{48}{16} \equiv 3(1+20) \equiv 15$ mod $16$.  
Similarly, since $\tbinom{56}{17} = \tbinom{2^3\cdot 7}{2^3 \cdot 2}$, we have that $\tbinom{56}{17} \equiv 0$ mod $8$ and 
$\tbinom{56}{17} \equiv 40 \tbinom 72 \equiv 8$ mod $16$.
\end{ejem}

\begin{rem}
Following the same procedure that lead us to \eqref{bino4}, congruences with bigger moduli can be obtained provided we impose some extra  
conditions on $q$ or $m$. For instance, 
$$\tbinom{2m}{2q} \equiv \tbinom mq \{1+2q(m-q) + \tf 23 q(q-1)(m-q)(m-q-1)\} \pmod{32, 64}$$
if $q\equiv 0,1$ mod $3$ or $m-q \equiv 0,1$ mod $3$, and  
$$\tbinom{2m}{2q+1} \equiv 2(m-q) \tbinom mq \{1+\tf 23 q(m-q-1)\}  \pmod{16, 32}$$
if $q$ or $m-q-1$ are divisible by 3.
\end{rem}

\sk
We will next need the arithmetic function $\varepsilon: \N_0 \rightarrow \N_0$, where 
$\varepsilon(k)$ is the biggest power of $2$ dividing $k!$, that is 
\begin{equation} \label{ek}
k!=2^{\varepsilon(k)} \ell_k, \quad \ell_k \text{ odd.}
\end{equation} 
That is, $\varepsilon(k)=\nu_2(k!)$ the $2$-adic valuation of $k!$.
By de Polignac's formula we have 
$\varepsilon(k) = \lfloor \tf{k}{2} \rfloor + \lfloor \tf{k}{2^2} \rfloor + \cdots + \lfloor \tf{k}{2^t} \rfloor$,  
with $t=\lfloor \log_2(k) \rfloor$.
It is thus clear that 
\begin{equation} \label{e2t}
\varepsilon(2^t) = 2^{t-1}+2^{t-2}+\cdots+2+1 = 2^t-1.
\end{equation}
In general, we have the following.
\begin{lema} \label{lemin}
For any $k,r,m \in \N_0$, $m$ odd, we have  
\begin{enumerate}[(a)]
\item $\varepsilon(k) \le k-1$, with equality if $k$ is a power of $2$;

\sk \item $\varepsilon(2^rm) = \varepsilon(m) + (2^r-1)m$; 

\sk \item $\varepsilon(2^r+1) = \varepsilon(2^r-1) +r$; and 

\sk \item $\varepsilon$ is an increasing function (monotonic when restricted to even or to odd numbers). 
\end{enumerate}
\end{lema}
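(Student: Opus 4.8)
The plan is to reduce all four parts to two elementary facts about $\varepsilon(k)=\nu_2(k!)$: the recursion $\varepsilon(k)=\varepsilon(k-1)+\nu_2(k)$, which is immediate from $k!=k\,(k-1)!$, and Legendre's formula $\varepsilon(k)=k-s_2(k)$, where $s_2(k)$ is the number of $1$'s in the binary expansion of $k$. First I would deduce Legendre's formula from the de Polignac formula already recorded above: writing $k=\sum_{j\ge 0}b_j2^j$ with each $b_j\in\{0,1\}$, one has $\lfloor k/2^i\rfloor=\sum_{j\ge i}b_j2^{j-i}$, and interchanging the two (finite) sums gives
$$\varepsilon(k)=\sum_{i\ge 1}\lfloor k/2^i\rfloor=\sum_{j\ge 0}b_j\sum_{i=1}^{j}2^{j-i}=\sum_{j\ge 0}b_j(2^j-1)=k-s_2(k).$$

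Granting these, parts (a)--(c) become one-line verifications. For (a), with $k\ge 1$ we have $s_2(k)\ge 1$, so $\varepsilon(k)=k-s_2(k)\le k-1$, with equality precisely when $s_2(k)=1$, that is when $k$ is a power of $2$. For (b), multiplication by $2^r$ merely shifts binary digits, so $s_2(2^rm)=s_2(m)$; hence $\varepsilon(2^rm)=2^rm-s_2(m)=2^rm-(m-\varepsilon(m))=\varepsilon(m)+(2^r-1)m$ (note that no parity hypothesis on $m$ is actually needed). For (c), assuming $r\ge 1$ so that $2^r+1$ is odd, I would use the recursion on the factorization $(2^r+1)!=(2^r+1)\cdot 2^r\cdot(2^r-1)!$: applying $\nu_2$ and using $\nu_2(2^r+1)=0$ and $\nu_2(2^r)=r$ yields $\varepsilon(2^r+1)=\varepsilon(2^r-1)+r$ immediately (equivalently, $s_2(2^r+1)=2$ and $s_2(2^r-1)=r$).

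For (d) I would argue straight from the recursion. Since $\varepsilon(k)-\varepsilon(k-1)=\nu_2(k)\ge 0$, the function $\varepsilon$ is non-decreasing on $\N_0$. For the strict monotonicity on the even integers and on the odd integers, I would look at the two-step increments: $\varepsilon(2j+2)-\varepsilon(2j)=\nu_2(2j+1)+\nu_2(2j+2)=\nu_2(2j+2)=1+\nu_2(j+1)\ge 1$, and similarly $\varepsilon(2j+3)-\varepsilon(2j+1)=\nu_2(2j+2)+\nu_2(2j+3)=\nu_2(2j+2)\ge 1$, so $\varepsilon$ is strictly increasing along the evens and along the odds.

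None of the four steps is genuinely hard; the only places that call for care are the interchange of summations in the proof of Legendre's formula and the small boundary exclusions, namely $k\ge 1$ in (a) (the bound is meaningless at $k=0$) and $r\ge 1$ in (c) (so that $2^r+1$ is odd). The effort is really organizational: once the recursion and Legendre's formula are isolated at the outset, each of (a)--(d) collapses to a short computation.
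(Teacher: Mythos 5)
Your proof is correct, but it takes a genuinely different route from the paper's. The paper's proof rests on the two relations $\varepsilon(2k)=\varepsilon(2k+1)$ and $\varepsilon(2k)=\varepsilon(k)+k$, the latter extracted from the identity $(2k)!=2^k\,k!\,(2k-1)!!$ that the paper uses elsewhere; (a) is then done by strong induction on this doubling relation, (b) by iterating it $r$ times, and (d) by combining the relations with the observation $\varepsilon(k+2)\ge\varepsilon(k)+1$. You instead take as your central tools the one-step recursion $\varepsilon(k)=\varepsilon(k-1)+\nu_2(k)$ and Legendre's closed form $\varepsilon(k)=k-s_2(k)$ (with $s_2$ the binary digit sum), derived from the de Polignac formula already recorded in the paper. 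This buys you several things: (a) becomes an equivalence ($\varepsilon(k)=k-1$ \emph{iff} $k$ is a power of $2$), strictly stronger than the paper's one-directional claim; (b) becomes the trivial observation that multiplying by $2^r$ shifts digits, and you correctly note the hypothesis ``$m$ odd'' is superfluous (it is equally superfluous in the paper's iteration, though the paper does not say so); and your two-step increment computation in (d) makes explicit the inequality $\varepsilon(k+2)\ge\varepsilon(k)+1$ that the paper merely asserts ``by definition.'' Part (c) is essentially identical in both proofs, via the factorization $(2^r+1)!=(2^r+1)\cdot 2^r\cdot(2^r-1)!$. You also flag two genuine boundary exclusions that the statement over $\N_0$ glosses over: (a) is vacuous or false at $k=0$, and (c) fails at $r=0$ (where $\varepsilon(2)=1\neq\varepsilon(0)+0$); both caveats are correct and worth recording. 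The only trade-off is that your approach requires proving Legendre's formula (the summation interchange you sketch is fine), whereas the paper's doubling relation is the tool it actually reuses later for its congruence results (Propositions \ref{mejora} and \ref{prop cong 2t}), so the paper's formulation is better adapted to its downstream use even though yours is sharper as a standalone argument.
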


\begin{proof}
First note that for any $k \ge 0$ we have 
\begin{equation} \label{relations}
\varepsilon(2k)=\varepsilon(2k+1) \qquad \text{and} \qquad \varepsilon(2k)=\varepsilon(k)+k.
\end{equation}
The first relation is obvious and the second one follows from 
$(2k)!=2^{\varepsilon(2k)} \, \ell_{2k}$ and $(2k)!=2^k \, k! (2k-1)!! = 2^{\varepsilon(k)+k}\, \ell_k \, (2k-1)!!$ 
and the fact that $\ell_k$, $\ell_{2k}$ and $(2k-1)!!$ are all odd numbers.

Now, the inequality in (a) follows directly by applying strong induction, since
$$\varepsilon(2(k+1)) = \varepsilon(k+1)+k+1 \le 2k+1=2(k+1)-1,$$
where we have used \eqref{relations}. The remaining assertion is clear.

The expression in (b) is obtained by repeated application of the second equality in \eqref{relations}.
Since $(2^r+1)!=(2^r+1)2^r (2^r-1)!$, the expression in (c) is straightforward from the definition of $\varepsilon$. 
Finally, (d) follows from \eqref{relations} and the fact that by definition we have $\varepsilon(k+2)\ge \varepsilon(k)+1$. 
\end{proof}

The following result, which is probably known,  
complements the previous proposition. 
We include a proof for completeness. 
We will need the following notation
\begin{equation} \label{emq}
\varepsilon(m,q) := \varepsilon(m)-\varepsilon(q)-\varepsilon(m-q), \qquad q\le m.
\end{equation} 

\begin{prop} \label{mejora}
For every $m,q,r \in \N$ we have $\varepsilon(m,q)\ge 0$ and
\begin{equation} \label{eq mejora}
\begin{aligned} 
\tbinom{2^r m}{2^r q} \equiv 
   \begin{cases}
      0          & \pmod{2^{\varepsilon(m,q)}}, \\
      \tbinom mq & \pmod{2^{\varepsilon(m,q)+1}}, \end{cases} 
\sk \\ 
\tbinom{2^r m}{2^r q+1} \equiv 
 \begin{cases}
   \tbinom mq & \pmod{2^{\varepsilon(m,q)}}, \\
    0         & \pmod{2^{\varepsilon(m,q)+r}}   . \end{cases} 
\end{aligned}
\end{equation}
In particular, $\tbinom{2^r m}{2^r q} \equiv \tbinom{2^r m}{2^r q+1}$ mod $2^{\varepsilon(m,q)}$.
\end{prop}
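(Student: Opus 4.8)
The plan is to pin down the \emph{exact} $2$-adic valuations $\nu_2\big(\tbinom{2^rm}{2^rq}\big)$ and $\nu_2\big(\tbinom{2^rm}{2^rq+1}\big)$ and then read every stated congruence off from them. The key preliminary observation is that
\[
\varepsilon(m,q)=\varepsilon(m)-\varepsilon(q)-\varepsilon(m-q)=\nu_2(m!)-\nu_2(q!)-\nu_2((m-q)!)=\nu_2\big(\tbinom mq\big),
\]
so $\varepsilon(m,q)\ge 0$ is immediate, being the $2$-adic valuation of a positive integer. I would use throughout only the two relations in \eqref{relations}, i.e.\@ $\varepsilon(2k)=\varepsilon(2k+1)$ and $\varepsilon(2k)=\varepsilon(k)+k$, the second iterated to $\varepsilon(2^rk)=\varepsilon(k)+(2^r-1)k$ for \emph{every} $k\ge 0$ (not only $k$ odd, as in Lemma \ref{lemin}(b)).

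For the even case I would expand $\nu_2\big(\tbinom{2^rm}{2^rq}\big)=\varepsilon(2^rm)-\varepsilon(2^rq)-\varepsilon(2^r(m-q))$ and substitute $\varepsilon(2^rk)=\varepsilon(k)+(2^r-1)k$ for $k=m,q,m-q$. The linear terms carry a common factor $(2^r-1)$ and cancel as $(2^r-1)\big(m-q-(m-q)\big)=0$, leaving precisely $\varepsilon(m,q)$. Thus $\tbinom{2^rm}{2^rq}$ and $\tbinom mq$ have the \emph{same} valuation $\varepsilon(m,q)$; writing each as $2^{\varepsilon(m,q)}\cdot(\text{odd})$ gives the first line of the first block at once, while their difference $2^{\varepsilon(m,q)}(\text{odd}-\text{odd})$ is divisible by $2^{\varepsilon(m,q)+1}$, giving the second line.

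For the $+1$ case the only new ingredients are $\varepsilon(2^rq+1)$ and $\varepsilon(2^r(m-q)-1)$. The former equals $\varepsilon(2^rq)$ directly from $\varepsilon(2k)=\varepsilon(2k+1)$. For the latter, setting $n=m-q$, I would establish the recursion $\varepsilon(2^rn-1)=\varepsilon(2^{r-1}n-1)+(2^{r-1}n-1)$ (apply $\varepsilon(2k+1)=\varepsilon(2k)$ and then $\varepsilon(2k)=\varepsilon(k)+k$) and solve it to get $\varepsilon(2^rn-1)=\varepsilon(n-1)+(2^r-1)n-r$. Substituting into $\nu_2\big(\tbinom{2^rm}{2^rq+1}\big)=\varepsilon(2^rm)-\varepsilon(2^rq+1)-\varepsilon(2^r(m-q)-1)$, the $(2^r-1)$-terms cancel again, and replacing $\varepsilon(m-q-1)$ by $\varepsilon(m-q)-\nu_2(m-q)$ yields $\nu_2\big(\tbinom{2^rm}{2^rq+1}\big)=\varepsilon(m,q)+r+\nu_2(m-q)\ge\varepsilon(m,q)+r$. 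This is the second line of the second block; the first line follows since $\tbinom mq\equiv 0\pmod{2^{\varepsilon(m,q)}}$ as well, and the final ``in particular'' is just the common divisibility of both coefficients by $2^{\varepsilon(m,q)}$.

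The hard part will be the recursion for $\varepsilon(2^r(m-q)-1)$: one must apply the two relations of \eqref{relations} in the right order and correctly track the $-r$ correction term. I would also dispose of the degenerate case $q=m$ separately, where $\tbinom{2^rm}{2^rq+1}=0$ and $\varepsilon(m-q-1)$ is undefined, by invoking the convention $\tbinom nk=0$ for $k>n$, so that the $+1$ congruences hold trivially there.
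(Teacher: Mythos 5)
Your proposal is correct and is essentially the paper's own argument: both proofs reduce everything to exact $2$-adic valuations via $\varepsilon$, starting from $\varepsilon(m,q)=\nu_2\tbinom mq\ge 0$ and iterating $\varepsilon(2k)=\varepsilon(k)+k$ to $\varepsilon(2^rk)=\varepsilon(k)+(2^r-1)k$ for all $k$, so that $\nu_2\tbinom{2^rm}{2^rq}=\varepsilon(m,q)=\nu_2\tbinom mq$ settles the even case and the odd-shift binomial has valuation $\varepsilon(m,q)+r+\nu_2(m-q)\ge\varepsilon(m,q)+r$. The only divergence is cosmetic: where you solve a recursion for $\varepsilon(2^r(m-q)-1)$, the paper reaches the same valuation in one line by writing $\tbinom{2^rm}{2^rq+1}=\tbinom{2^rm}{2^rq}\cdot\tfrac{2^r(m-q)}{2^rq+1}$; moreover your justification of $\tbinom{2^rm}{2^rq+1}\equiv\tbinom mq \pmod{2^{\varepsilon(m,q)}}$ (both sides vanish modulo $2^{\varepsilon(m,q)}$) is cleaner than the paper's corresponding line, which asserts the difference vanishes modulo $2^{\varepsilon(m,q)+r}$ --- an evident slip, since $\tbinom mq$ is exactly divisible by $2^{\varepsilon(m,q)}$.
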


\begin{proof}
First note that 
$$\tbinom{m}{q} = \tfrac{2^{\varepsilon(m)}}{2^{\varepsilon(q)} \, 2^{\varepsilon(m-q)}} \tfrac{\ell_m}{\ell_q \,\ell_{m-q}} = 2^{\varepsilon(m)-\varepsilon(q)-\varepsilon(m-q)} \in \Z.$$ 
Thus, since $\ell_m, \ell_k$ and $\ell_{m-q}$ are odd numbers, we have that 
$\tbinom{m}{q} = 2^{\varepsilon(m,q)} \, \ell$,
with $\ell$ odd, and hence $\varepsilon(m,q)\ge 0$.

Now, by (b) of Lemma \ref{lemin} we have 
$$\tbinom{2^r m}{2^r q} = \tf{2^{\varepsilon(2^rm)}}{2^{\varepsilon(2^rq)} \, 2^{\varepsilon(2^r(m-q))}}  
\tf{\ell_{2^r m}}{\ell_{2^r q} \, \ell_{2^r(m-q)}} = \tf{2^{\varepsilon(m)}}{2^{\varepsilon(q) + \varepsilon(m-q)}} \, \ell' $$
with $\ell'$ odd. In this way, we get $\tbinom{2^r m}{2^r q} \equiv 0$ mod $2^{\varepsilon(m,q)}$ and  
$$\tbinom{2^r m}{2^r q} - \tbinom{m}{q} = 2^{\varepsilon(m,q)} (\ell'-\ell),$$
with $\ell, \ell'$ odd, and thus the first congruence is established.

On the other hand, we have
$$\tbinom{2^r m}{2^r q+1} = \tf{(2^rm)!}{(2^rq)!(2^r(m-q))!} \tf{2^r(m-q)}{2^rq+1} = 2^{\varepsilon(m,q)+r} \,  \tf{(m-q) \ell''}{2^rq+1}$$
for some odd integer $\ell''$. Also, $\tbinom{2^r m}{2^r q+1}-\tbinom{m}{q} \equiv 0$ mod $2^{\varepsilon(m,q)+r}$, and the second congruence in the statement follows.

The remaining assertion follows directly from \eqref{eq mejora}.
\end{proof}

In particular, Proposition \ref{mejora} implies that for any $m,q,r \in \N$ we have
\begin{equation} \label{binoweird}
\tbinom{2^rm}{2^rq+s} \equiv \tbinom mq (1-\delta_{s,t}) \pmod{2^{\varepsilon(m,q) +t}}
\end{equation}
where $s,t\in \{0,1\}$ and $\delta_{s,t}$ is the Kronecker $\delta$-function.

\sk 
In certain cases, Lemma \ref{mejora} improves Proposition~\ref{prop congs}. 
This will be the case, for instance, when $\varepsilon(m,q)\ge 4$. 

\begin{prop} \label{prop cong 2t}
Let $r$ and $t$ be natural numbers and for fixed $t$ put $m_t=2^t$, $q_t=2^{t-1}-1$.
\begin{enumerate}[(a)]
\item For $(m,q)=(m_t+1,q_t), (m_t+1,q_t-1)$ or $(m_t,q_t-1)$ we have
$$\tbinom{2^r m}{2^r q} \equiv 0 \pmod {2^{t-1}} \qquad \text{and}  \qquad \tbinom{2^r m}{2^r q} \equiv \tbinom{m}{q} \pmod {2^{t}}.$$ 

\item Moreover, 
$$\tbinom{2^r m_t}{2^r q_t} \equiv 0 \pmod{2^{t}} \qquad \text{and} \qquad 
\tbinom{2^r m_t}{2^r q_t} \equiv \tbinom{m_t}{q_t} \pmod{2^{t+1}}.$$ 
\end{enumerate}
\end{prop}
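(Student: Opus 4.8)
The plan is to notice that Proposition~\ref{mejora} already carries all the weight: for any admissible pair $(m,q)$ it pins down $\tbinom{2^rm}{2^rq}$ modulo $2^{\varepsilon(m,q)}$ and modulo $2^{\varepsilon(m,q)+1}$, uniformly in $r$. Consequently both parts of the statement reduce to a single arithmetic fact, namely evaluating $\varepsilon(m,q)=\varepsilon(m)-\varepsilon(q)-\varepsilon(m-q)$ (as defined in \eqref{emq}) for the four relevant pairs and checking that it equals $t-1$ for the three pairs of (a) and $t$ for the pair $(m_t,q_t)$ of (b). Once these values are in hand, assertions (a) and (b) are exactly the two lines of \eqref{eq mejora} with $\varepsilon(m,q)$ specialized to $t-1$ and $t$; in particular the independence of $r$ is inherited automatically from Proposition~\ref{mejora}, so nothing beyond the $\varepsilon$-computation is needed.

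To evaluate $\varepsilon(m,q)$ I would pass to binary digit sums. Writing $s_2(k)$ for the number of ones in the base-$2$ expansion of $k$, de Polignac's formula yields the classical identity $\varepsilon(k)=k-s_2(k)$, whence $\varepsilon(m,q)=s_2(q)+s_2(m-q)-s_2(m)$ (this is Kummer's theorem, counting carries in $q+(m-q)$). All numbers occurring are near powers of $2$, so their digit sums are immediate: $s_2(2^t)=1$, $s_2(2^t+1)=2$, $s_2(2^{t-1}-1)=t-1$, $s_2(2^{t-1}-2)=t-2$, and, for $t\ge 3$, also $s_2(2^{t-1}+1)=s_2(2^{t-1}+2)=2$ and $s_2(2^{t-1}+3)=3$. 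Computing $m-q$ in each case (namely $2^{t-1}+2$, $2^{t-1}+3$, $2^{t-1}+2$ for the three pairs of (a), and $2^{t-1}+1$ for (b)) and substituting gives $\varepsilon(m,q)=(t-1)+2-2=t-1$, then $(t-2)+3-2=t-1$, then $(t-2)+2-1=t-1$ for (a), and $(t-1)+2-1=t$ for (b), exactly as required. Alternatively one could run the same computation using $\varepsilon(2^t)=2^t-1$ from \eqref{e2t} together with the relations in Lemma~\ref{lemin}, but the digit-sum route is the cleanest.

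The one delicate point — and the only genuine obstacle — is that these tidy digit-sum values presuppose that the nonzero bits of $m$, $q$ and $m-q$ do not collide, which forces $t$ to be large enough. For instance $2^{t-1}+3$ has three distinct bits only when $t-1\ge 2$, i.e.\ $t\ge 3$, and $2^{t-1}+1$ has two distinct bits only when $t\ge 2$. Thus the argument above establishes (a) for $t\ge 3$ and (b) for $t\ge 2$, and the finitely many exceptional small cases ($t=1,2$ in (a), $t=1$ in (b)) must be treated separately and directly from \eqref{emq}; indeed one sees there that the clean formula $\varepsilon(m,q)=t-1$ breaks down for these small $t$, so the stated range of $t$ should be understood accordingly. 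Apart from this careful bookkeeping of carries, the proof requires no further estimate.
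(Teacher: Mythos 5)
Your proposal is correct and follows the same skeleton as the paper's proof: both reduce everything to Proposition \ref{mejora} and then verify that $\varepsilon(m,q)=t-1$ for the three pairs in (a) and $\varepsilon(m_t,q_t)=t$ in (b). The only methodological difference is how the valuation is computed: you invoke Legendre's formula $\varepsilon(k)=k-s_2(k)$ (equivalently Kummer's carry count, giving $\varepsilon(m,q)=s_2(q)+s_2(m-q)-s_2(m)$) and read off binary digit sums, whereas the paper grinds through the recursions $\varepsilon(2k)=\varepsilon(2k+1)$ and $\varepsilon(2k)=\varepsilon(k)+k$ of \eqref{relations} together with \eqref{e2t} and Lemma \ref{lemin}(c). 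Your route is shorter and, more importantly, makes the range of validity transparent; your caveat about small $t$ is not pedantry but a genuine catch. The paper's computation silently substitutes $\varepsilon(2^{t-2}+1)=2^{t-2}-1$, which uses that $2^{t-2}+1$ is odd and hence requires $t\ge 3$; and part (a) as stated is in fact false at $t=2$: for $(m,q)=(m_2+1,q_2)=(5,1)$ one has $\tbinom{10}{2}=45$, which is odd, contradicting the claimed $\tbinom{2^rm}{2^rq}\equiv 0 \pmod{2^{t-1}}$ (here $2^{t-1}=2$, and indeed $\varepsilon(5,1)=0\ne t-1$); the degenerate pairs with $q=q_2-1=0$ fail as well, since $\tbinom{2^rm}{0}=1$. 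Likewise (b) fails at $t=1$, where the pair is $(2,0)$ and $\tbinom{2^{r+1}}{0}=1\not\equiv 0\pmod 2$, while the paper's claim $\varepsilon(m_t-q_t)=\varepsilon(2^{t-1}+1)=2^{t-1}-1$ again needs $t\ge 2$. So your restrictions ($t\ge 3$ in (a), $t\ge 2$ in (b)) are exactly the correct reading of the statement, and your separate treatment of the leftover cases --- $t=1$ in (a) being trivial/degenerate, as the paper itself observes, and $t=2$ in (a), $t=1$ in (b) genuinely failing --- completes the argument; in this respect your write-up is actually more accurate than the paper's.
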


\begin{proof}
(a) If $t=1$ the result is trivial for mod $2^{t-1}$ and holds by Proposition \ref{prop congs} for mod $2^t$. 
For $t\ge 2$ fixed, consider the numbers $m=m_t+1=2^t+1$ and $q=q_t=2^{t-1}-1$; hence $m-q=2^{t-1}+2$. 
By using \eqref{e2t} and \eqref{relations} we have 
$\varepsilon(m)=\varepsilon(2^t+1) = \varepsilon(2^t)=2^t-1$, 
\begin{align*}
\varepsilon(q) & = \varepsilon(2^{t-1}-2) = \varepsilon\big(2(2^{t-2}-1)\big)= \varepsilon(2^{t-2}-1)+2^{t-2}-1, \\
\varepsilon(m-q) & = \varepsilon(2^{t-1}+2) = \varepsilon\big(2(2^{t-2}+1)\big) = \varepsilon(2^{t-2}+1)+2^{t-2}+1.
\end{align*}
Now, by Lemma \ref{lemin} (c) we have 
$\varepsilon(q) = \varepsilon(2^{t-2}+1) - (t-2)+ 2^{t-2}+1 $. In this way, by \eqref{emq}, we have
\begin{eqnarray*}
\varepsilon(m,q) &=& 2^t-1 -\{2\varepsilon(2^{t-2}+1) +2^{t-1} - (t-2) \} \\
&=& 2^t-1 -\{2(2^{t-2}-1) +2^{t-1} - (t-2)\} \\
&=& 2^t-1 - (2^{t-1}+2^{t-1} -t) = t-1. 
\end{eqnarray*}
The result now follows directly by \eqref{eq mejora} in this case. 
Finally, by using Lemma \ref{lemin}, it is easy to check that  
$$\varepsilon(m_t+1,q_t)=\varepsilon(m_t+1,q_t-1)=\varepsilon(m_t,q_t),$$ 
and hence the statement in (a) follows. 

\sk 
(b) Proceeding similarly as above 
we have $\varepsilon (m_t)=2^t-1$, 
$\varepsilon (q_t)= 2^{t-1}-1 -(t-1)$ 
and $\varepsilon(m_t-q_t)=2^{t-1}-1$. 
Thus, $\varepsilon(m_t,q_t)=t$, 
as we wanted to see.
\end{proof}

\section{Consequences for central binomial coefficients}
We will apply the formulas for BKP's of the previous sections to obtain some new explicit and recursive expressions for the numbers
$$c_m = \tbinom{2m}{m}, \qquad m\ge 0,$$ 
known as \textit{central binomial coefficients}. 
For $0\le m\le 12$ we have 
$1, 2, 6, 20, 70, 252$, $924$, $3432$, $12870$, $48620$, $184756$, $705432$ and $2704156$ (see A000984 in \cite{OEIS}).

By taking $r=1$ and $p=m$ in \eqref{comb4} we get the expression 
\begin{equation} \label{centralsum}
c_m = \sum_{\substack{0\le \ell \le m \\ \ell \equiv m \, (2)}} 
2^\ell \tbinom{m-\ell}{\frac{m-\ell}{2}} \tbinom{m}{\ell} = 
m! \sum_{\substack{0\le \ell \le m \\ \ell \equiv m \, (2)}} 
\tfrac{2^\ell}{\ell! \{(\frac{m-\ell}{2})!\}^2},
\end{equation}
or, distinguishing the cases $m$ even or odd,
\begin{equation} \label{centralsumcases}
\begin{split}
c_{2q}   &= (2q)! \sum_{j=0}^q \tfrac{2^j}{j!(2j-1)!!\{(q-j)!\}^2},              \\
c_{2q+1} &= 2(2q+1)! \sum_{j=0}^q \tfrac{2^j}{j!(2j+1)!!\{(q-j)!\}^2} .
\end{split}
\end{equation}

Also, by taking $m=2q, 2q+1$ in Lemma \ref{prop1} we get 
\begin{equation} \label{centralitos}
\begin{split}
c_{2q}   & = \sum_{j=0}^q 4^j  \tbinom{2q}{2j} \, c_{q-j},              \\
c_{2q+1} & = 2\sum_{j=0}^q 4^j \tbinom{2q+1}{2j+1} \, c_{q-j}.
\end{split}
\end{equation}
The above identities recursively express $c_{2q}$ and $c_{2q+1}$ 
in terms of the first $q+1$ central binomial coefficients $c_0,c_1,\ldots,c_q$.
In other words, we have
\begin{equation} \label{c2q}
\begin{aligned}
c_{2q} & = c_q + 4 \tbinom{2q}{2} c_{q-1} + 4^2 \tbinom{2q}{4} c_{q-2} + \cdots + 4^{q-1} \tbinom{2q}{2q-2} c_1 + 4^q,
 \\ & \\
c_{2q+1} & = 2 \big\{ \tbinom{2q+1}{1} c_q + 4 \tbinom{2q+1}{3} c_{q-1} + 4^2 \tbinom{2q+1}{5} c_{q-2} + 
\cdots + 4^{q-1} \tbinom{2q+1}{2q-1} c_1 + 4^q \big\},
\end{aligned}
\end{equation}
since $c_0=1$.
It is well known that $\tbinom{2m}m$ are even numbers for any $m\ge 1$. This is trivial from \eqref{c2q}, since $c_1=2$.

Furthermore, by taking $m=2q$ in \eqref{b1} and \eqref{stirling} we get the reduction formulas
\begin{equation} \label{4q2q}
c_{2q} = c_q  \, \sum_{j=0}^q \tf{2^j}{j!(2j-1)!!} \, (q)_j^2 =  
c_q  \, \sum_{j=0}^q \tf{2^j}{j!(2j-1)!!} \, \sum_{k,l=0}^q (-1)^{k+l} s(j,k) s(j,l) q^{k+l}
\end{equation} 
expressing $c_{2q}$ in terms of $c_q$.
This allows to give nice expressions for $\tbinom{4q}{2q} / \tbinom{2q}{q}$ and $\tbinom{4q}{2q} - \tbinom{2q}{q}$, and by iteration for 
$\tbinom{2^{r+1}q}{2^r q} / \tbinom{2^{r}q}{2^{r-1}q}$ and $\tbinom{2^{r+1}q}{2^r q} - \tbinom{2^{r}q}{2^{r-1}q}$, for any $r$.

\msk 
We now give alternative expressions to \eqref{centralitos} for the central binomial coefficients, recursively expressing $c_{2q}$ 
and $c_{2q+1}$ 
in terms of fractions involving $c_0,c_1,\ldots,c_q$.
\begin{prop} \label{prop cm alt}
For any $q\in \N$ we have
\begin{equation} \label{c2q alt}
\begin{split}
c_{2q}   & = \tfrac{4q-1}{2q^2} \sum_{j=1}^q 4^j j \tbinom{2q}{2j} c_{q-j}, \\
c_{2q+1} & = \tfrac{2(4q+1)}{(2q+1)^2} \sum_{j=0}^q 4^j (2j+1) \tbinom{2q+1}{2j+1} c_{q-j}.
\end{split}
\end{equation}
\end{prop}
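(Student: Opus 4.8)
The plan is to derive both identities directly from the recursions already established in \eqref{centralitos}, by exhibiting a simple first-order recurrence in $j$ for the summands and then summing it (a \emph{creative-telescoping} argument). Treating the even case first, I set $a_j = 4^j \binom{2q}{2j} c_{q-j}$, so that \eqref{centralitos} reads $c_{2q} = \sum_{j=0}^q a_j$. Using the elementary recurrence $c_m = \tfrac{2(2m-1)}{m}\,c_{m-1}$ for central binomial coefficients, together with $\binom{2q}{2j+2}/\binom{2q}{2j} = (2q-2j)(2q-2j-1)/[(2j+1)(2j+2)]$, a short computation collapses to the clean ratio
\[
(2j+1)(2j+2)\,a_{j+1} = 4(q-j)^2\,a_j, \qquad 0 \le j \le q-1.
\]

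Next I would sum this recurrence over $j = 0,\ldots,q-1$ and reindex the left-hand side by $k=j+1$. Writing $A_i = \sum_{j=0}^q j^i a_j$ for $i=0,1,2$, the left side becomes $\sum_k (2k)(2k-1)a_k = 4A_2 - 2A_1$, while expanding $(q-j)^2 = q^2 - 2qj + j^2$ on the right (and noting the boundary term at $j=q$ vanishes) gives $4q^2 A_0 - 8q A_1 + 4A_2$. The crucial feature is that the second-moment terms $4A_2$ cancel, leaving the closed linear relation $(4q-1)A_1 = 2q^2 A_0$. Since $A_0 = c_{2q}$ and $A_1 = \sum_{j=1}^q j\,4^j\binom{2q}{2j}c_{q-j}$, this is precisely the even identity in \eqref{c2q alt}.

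The odd case runs in complete parallel. With $b_j = 4^j\binom{2q+1}{2j+1}c_{q-j}$ one finds $(2j+2)(2j+3)\,b_{j+1} = 4(q-j)^2 b_j$; summing, reindexing, and cancelling the second moment as before yields $(4q+1)B_1 = 2q^2 B_0$, where $B_i = \sum_j j^i b_j$. Combining this with the identity $\sum_j (2j+1)b_j = 2B_1 + B_0$ and with $c_{2q+1} = 2B_0$ from \eqref{centralitos}, and using the simplification $(2q+1)^2 - (4q+1) = 4q^2$, I recover the stated formula for $c_{2q+1}$.

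The one genuinely delicate point is the telescoping step: recognizing that summing the first-order recurrence and expanding $(q-j)^2$ forces the unwanted second moment $A_2$ (respectively $B_2$) to drop out, so that $A_0$ and $A_1$ (respectively $B_0$ and $B_1$) are tied together by a single linear equation. Everything else is routine factorial bookkeeping, provided one is attentive to the reindexing and to the vanishing boundary contributions at $j=0$ and $j=q$.
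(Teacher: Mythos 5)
Your proof is correct, and it takes a genuinely different route from the paper's. The paper proves \eqref{c2q alt} by evaluating the reduction formula \eqref{Kraw1} with $p=m$ at the \emph{two} points $j=0$ and $j=1$: the $j=0$ evaluation is \eqref{centralitos}, and the $j=1$ evaluation, combined with $K_\ell^m(1)=(1-\tfrac{2\ell}{m})\tbinom{m}{\ell}$ and the closed form $K_m^{2m}(2)=\tfrac{1}{1-2m}\tbinom{2m}{m}$, yields $\tbinom{2m}{m}=\tfrac{2m-1}{m^2}\sum_{\ell\equiv m\,(2)} 2^\ell\,\ell\,\tbinom{m-\ell}{(m-\ell)/2}\tbinom{m}{\ell}$, which is then specialized to $m=2q$ and $m=2q+1$. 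You reach exactly the same linear relation between the zeroth and first moments of the summands of \eqref{centralitos}, but by creative telescoping instead: your term ratios $(2j+1)(2j+2)\,a_{j+1}=4(q-j)^2a_j$ and $(2j+2)(2j+3)\,b_{j+1}=4(q-j)^2b_j$ both check out (using $c_m=\tfrac{2(2m-1)}{m}c_{m-1}$), the boundary contributions vanish as you say, the second moments $A_2$, $B_2$ genuinely cancel upon summation, and the endgame with $(4q+1)B_1=2q^2B_0$, $c_{2q+1}=2B_0$ and $(2q+1)^2-(4q+1)=4q^2$ is correct. The trade-off: the paper's method is uniform with its Krawtchouk machinery and extends naturally --- plugging further values $j=2,3,\dots$ into \eqref{Kraw1} produces higher-moment relations, and the identity is explained as a shadow of the character computation --- whereas yours is more elementary and mechanical: once \eqref{centralitos} is granted, the summands are hypergeometric terms, so the first-order recurrence (and hence the moment relation) is guaranteed to exist and requires no Krawtchouk evaluations at $1$ or $2$; it also makes transparent where the rational prefactors $\tfrac{4q-1}{2q^2}$ and $\tfrac{2(4q+1)}{(2q+1)^2}$ come from. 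One caveat worth stating explicitly: your argument is not independent of the Krawtchouk results, since \eqref{centralitos} is itself the $j=0$ specialization of Theorem \ref{teo1}; this is harmless here because the paper establishes \eqref{centralitos} before Proposition \ref{prop cm alt}.
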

\begin{note}
Notice that $c_{2q}$ depends only on $c_0,c_1,\ldots,c_{q-1}$; compare with \eqref{centralitos}.
\end{note}
\begin{proof}
The result will follow directly by considering $p=m$ in Theorem \ref{teo1} and evaluating the resulting expressions \eqref{Kraw1} at 
$j=0$ and $j=1$. In fact, by \eqref{k0k1} we have $K_\ell^m(0) = \tbinom{m}{\ell}$ and 
$K_\ell^m(1) = (1-\tfrac{2\ell}{m}) \tbinom{m}{\ell}$.
Taking $j=0$ and $j=1$ in \eqref{Kraw1} respectively we have
\begin{equation} \label{j=1}
\tbinom{2m}{m} = \sum_{\substack{ 0\le \ell \le m \sk \\ \ell \equiv m \,(2)}} 
2^\ell \, \tbinom{m-\ell}{\tfrac{m-\ell}2} \tbinom{m}{\ell}
\qquad \text{and} \qquad
K_m^{2m}(2) = \sum_{\substack{ 0\le \ell \le m \sk \\ \ell \equiv m \,(2)}} 
2^\ell \, \tbinom{m-\ell}{\tfrac{m-\ell}2} K_\ell^m(1).
\end{equation}

We now compute $K_m^{2m}(2)$.
By \eqref{eq. kraws}, after some computations, we have 
$$K_p^{n}(2) = \tbinom{n-2}{p} - 2\tbinom{n-2}{p-1} + \tbinom{n-2}{p-2} 
             = \tbinom np \tfrac{(n-p)(n-p-1)-2p(n-p)+p(p-1)}{n(n-1)} $$ 
and hence, taking $p=m$ and $n=2m$ we get 
$$K_m^{2m}(2) = \tbinom{2m}{m} \tfrac{m(m-1)-2m^2+m(m-1)}{2m(2m-1)} = \tfrac{1}{1-2m} \tbinom{2m}{m} .$$ 
Putting all these things together into the second equality in \eqref{j=1} we have
$$\tfrac{1}{1-2m} \tbinom{2m}{m} =  \sum_{\substack{ 0\le \ell \le m \sk \\ \ell \equiv m \,(2)}} 
2^\ell \, \tbinom{m-\ell}{\tfrac{m-\ell}2} (1-\tfrac{2\ell}{m}) \tbinom{m}{\ell} 
= \tbinom{2m}{m} - \tfrac{2}{m} \sum_{\substack{ 0\le \ell \le m \sk \\ \ell \equiv m \,(2)}} 
2^\ell \, \ell  \, \tbinom{m-\ell}{\tfrac{m-\ell}2} \tbinom{m}{\ell},$$
where we have used the first equation in \eqref{j=1}. From this we get
$$ \tbinom{2m}{m} =  \tfrac{2m-1}{m^2} 
\sum_{\substack{ 0\le \ell \le m \sk \\ \ell \equiv m \,(2)}} 
2^\ell \, \ell \, \tbinom{m-\ell}{\tfrac{m-\ell}2} \tbinom{m}{\ell} $$
from which, by taking $m=2q$ and $m=2q+1$, the expressions in \eqref{c2q alt} follows directly after some trivial computations.
\end{proof}

By combining the previous expressions obtained for $c_{2q}$ and $c_{2q+1}$  we get two other recursions for $c_q$ in terms of 
all the previous $c_0,\ldots,c_{q-1}$, one in terms of `even' binomials $\tbinom{2q}{2j}$ and the other in terms of `odd' binomials 
$\tbinom{2q+1}{2j+1}$, for $0\le j\le q$.
\begin{coro}
For every $q\in \N$ we have 
\begin{equation} \label{cq recs}
c_{q} = \sum_{j=1}^q 4^j  \, \tbinom{2q}{2j} \, \big\{\tfrac{4q-1}{2q^2+1}j -1 \big\} \, c_{q-j} = 
\sum_{j=1}^q 4^j \, \tbinom{2q+1}{2j+1} \, \big\{ \tfrac{4q+1}{2q^2}j -1 \big\} \, c_{q-j}.
\end{equation}
\end{coro}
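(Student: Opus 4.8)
The plan is to exploit the fact that, thanks to the previous results, each even-index central binomial coefficient $c_{2q}$ and each odd-index one $c_{2q+1}$ now admits \emph{two} different recursive expansions in $c_0,\dots,c_q$: the ``plain'' ones in \eqref{centralitos} and the ``weighted'' ones in \eqref{c2q alt}. Equating the two expansions of $c_{2q}$ eliminates $c_{2q}$ itself and leaves a single linear relation among $c_q,c_{q-1},\dots,c_0$ from which $c_q$ can be isolated; the same maneuver applied to $c_{2q+1}$ yields the second recursion. No new identity is needed, only algebraic reorganization of facts already established.

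First I would treat the even case. In \eqref{centralitos} the $j=0$ summand is $4^0\tbinom{2q}{0}c_q=c_q$, so that expansion reads $c_{2q}=c_q+\sum_{j=1}^q 4^j\tbinom{2q}{2j}c_{q-j}$, whereas in \eqref{c2q alt} the $j=0$ summand is annihilated by the factor $j$, so that sum already starts at $j=1$. Setting the two expressions for $c_{2q}$ equal and transposing the plain tail onto the weighted side, the common factors $\tbinom{2q}{2j}c_{q-j}$ collect termwise and produce exactly the bracketed rational weight displayed in the first identity.

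The odd case proceeds identically but demands slightly more bookkeeping, because here the two $j=0$ contributions are not $c_q$ and $0$ but two \emph{distinct} multiples of $c_q$: the value $2(2q+1)c_q$ coming from \eqref{centralitos} and the value $\tfrac{2(4q+1)}{2q+1}c_q$ coming from \eqref{c2q alt}. After cancelling the common factor $2$ and moving all $c_q$-terms to one side, the coefficient of $c_q$ simplifies to $(2q+1)-\tfrac{4q+1}{2q+1}=\tfrac{4q^2}{2q+1}$; dividing through by this quantity and again collecting the common tails $\tbinom{2q+1}{2j+1}c_{q-j}$ delivers the second identity.

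I expect the only genuine obstacle to be this last rational simplification in the odd case: one must verify that the two $c_q$-coefficients differ by exactly $4q^2/(2q+1)$ and then check that, after division, the per-$j$ weight collapses to the form claimed in the statement. Everything else is a direct substitution of the already-proved expansions \eqref{centralitos} and \eqref{c2q alt}, so the argument reduces to careful transposition of sums and one short fraction computation.
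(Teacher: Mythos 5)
Your proposal is correct and follows exactly the paper's (very brief) proof: equate the two expansions of $c_{2q}$ (resp.\@ $c_{2q+1}$) from \eqref{centralitos} and \eqref{c2q alt}, isolate the $j=0$ contributions, and solve for $c_q$; in particular your odd-case coefficient computation $(2q+1)-\tfrac{4q+1}{2q+1}=\tfrac{4q^2}{2q+1}$ is precisely the ``some more calculations'' the paper alludes to. One caveat: carried out faithfully, your argument yields $c_q=\sum_{j=1}^q 4^j\tbinom{2q}{2j}\big\{\tfrac{4q-1}{2q^2}\,j-1\big\}c_{q-j}$ and $(2q+1)\,c_q=\sum_{j=1}^q 4^j\tbinom{2q+1}{2j+1}\big\{\tfrac{4q+1}{2q^2}\,j-1\big\}c_{q-j}$, so the statement as printed contains two misprints --- the denominator in the first weight should be $2q^2$, not $2q^2+1$, and the second sum is missing an overall factor $\tfrac{1}{2q+1}$ --- as a check at $q=1$ the printed formulas give $0$ and $6$ instead of $c_1=2$. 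So your derivation is not only the paper's argument but, when pushed through as you outline, it also detects and corrects these typos.
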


\begin{proof}
By equating the expressions for $c_{2q}$ in \eqref{centralitos} and \eqref{c2q alt} and isolating the contribution for $j=0$ in the sums we get the first equality in \eqref{cq recs}. Proceeding similarly with $c_{2q+1}$, after some more calculations, we get the second equality in \eqref{cq recs}. 
\end{proof}

\begin{ejem}
We now compute $c_8=\tbinom{16}{8}$ by using \eqref{centralitos} and \eqref{c2q alt} and the values
$c_0=1, c_1=2, c_2=6, c_3=20$, $c_4=70$.
Taking $q=4$, by \eqref{centralitos} and \eqref{c2q alt} we respectively have 
\begin{align*} 
& c_8 = \sum_{j=0}^4 4^j \tbinom{8}{2j} c_{4-j} = c_4 + 4\tbinom 82 c_3 + 4^2 \tbinom 84 c_2 + 4^3 \tbinom 86 c_1 + 4^4, 
\sk \\
& c_8 = \tfrac{15}{32}  \sum_{j=1}^4 4^j \tbinom{8}{2j} \, j \, c_{4-j} = 
\tfrac{15}{32} \{ 4\tbinom 82 c_3 + 4^2 \tbinom 84 2c_2 + 4^3 \tbinom 86 3c_1 + 4^5\}, 
\end{align*}
hence $c_8 = 12{.}870 = \tfrac{15 \cdot 27456}{32}$, as one can easily check.
\end{ejem}

\subsubsection*{Central binomial coefficients and Krawtchouk polynomials}
We will now give a mixed relation between central binomial coefficients and BKP's of the form $K_{2t}^{2q}(q)$. For $q$ even, we will get a cancellation rule; while, if $q$ is odd, we will get a recursive formula for $c_{2q}$ in terms of  
$c_0,c_1,\ldots,c_{q-1}$. 
We will first need the following result.
\begin{lema} \label{k2ppp}
For any $q\in \N_0$ we have
$$K_{2q}^{4q}(2q)=(-1)^q \tbinom{2q}{q} \qquad \text{and} \qquad K_{2q+1}^{4q+2}(2q+1)=0.$$
\end{lema}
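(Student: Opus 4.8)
The plan is to evaluate both Krawtchouk values directly from the definition \eqref{eq. kraws} and reduce them to a single classical alternating sum of squares of binomial coefficients. Write $n=2N$ and take $x=p=N$. Since $\tbinom{n-x}{p-j}=\tbinom{N}{N-j}=\tbinom Nj$ and $\tbinom xj=\tbinom Nj$, formula \eqref{eq. kraws} collapses to
$$K_N^{2N}(N) = \sum_{j=0}^N (-1)^j \tbinom{N}{j}^2.$$
Setting $N=2q$ produces $K_{2q}^{4q}(2q)$ and setting $N=2q+1$ produces $K_{2q+1}^{4q+2}(2q+1)$, so it remains only to evaluate $S_N:=\sum_{j=0}^N (-1)^j \tbinom Nj^2$ in the two parities.

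For the evaluation I would extract the coefficient of $x^N$ from the identity $(1-x)^N(1+x)^N=(1-x^2)^N$. Expanding the left-hand side as a Cauchy product and using $\tbinom{N}{N-j}=\tbinom Nj$, the coefficient of $x^N$ equals $S_N$; expanding the right-hand side, the coefficient of $x^N$ is $0$ when $N$ is odd and $(-1)^{N/2}\tbinom{N}{N/2}$ when $N$ is even. Hence $S_{2q}=(-1)^q\tbinom{2q}{q}$ and $S_{2q+1}=0$, which yields both claimed identities at once.

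The vanishing of $K_{2q+1}^{4q+2}(2q+1)$ also admits a one-line proof from the symmetry relation $K_k^n(j)=(-1)^j K_{n-k}^n(j)$ recalled just before \eqref{sym}: taking $n=4q+2$, $k=2q+1=n-k$ and the odd value $j=2q+1$ gives
$$K_{2q+1}^{4q+2}(2q+1) = (-1)^{2q+1} K_{2q+1}^{4q+2}(2q+1) = -K_{2q+1}^{4q+2}(2q+1),$$
forcing it to be $0$. I would include this as a short remark, since it makes the second identity transparent without any coefficient bookkeeping.

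There is essentially no hard obstacle here: the only genuine content is the classical alternating sum $S_N$, and the single step that carries the argument is the coefficient extraction from $(1-x^2)^N$. The only care needed is to verify that the two Krawtchouk values are exactly the $N$ even and $N$ odd instances of the same sum $S_N$, which is immediate once one observes the reflection $\tbinom{N}{N-j}=\tbinom Nj$ that turns the two binomial factors in \eqref{eq. kraws} into a perfect square.
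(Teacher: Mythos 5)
Your proposal is correct and follows essentially the same route as the paper: both reduce $K_N^{2N}(N)$ via the definition \eqref{eq. kraws} to the alternating sum $\sum_{j=0}^N(-1)^j\tbinom{N}{j}^2$ and then evaluate it by parity, the only difference being that the paper cites this evaluation as well known while you prove it by extracting the coefficient of $x^N$ from $(1-x)^N(1+x)^N=(1-x^2)^N$. Your additional symmetry argument $K_{2q+1}^{4q+2}(2q+1)=-K_{2q+1}^{4q+2}(2q+1)$ is also valid and is consistent with the general fact $K_k^n(\tfrac n2)=0$ for $k$ odd that the paper records right after its proof.
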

\begin{proof}
By \eqref{eq. kraws} we have
$$K_p^{2p}(p) = \sum_{j=0}^p (-1)^j \tbinom pj \tbinom{p}{p-j} = \sum_{j=0}^p (-1)^j \tbinom pj ^2,$$
and it is well known that 
$$\sum_{j=0}^p (-1)^j \tbinom pj ^2 = 
\begin{cases} 0 & \qquad \text{$p$ odd,} \sk \\ (-1)^{p/2} \binom{p}{p/2} & \qquad \text{$p$ even.} \end{cases}$$
Hence, the result follows directly by considering the cases $p=2q$ and $p=2q+1$. 
\end{proof}

It is known that $K_k^n(\tfrac n2)=0$ for $k$ odd and $K_k^n(\tfrac n2) = (-1)^{k/2}\tbinom{n/2}{k/2}$ for $k$ even 
(see for instance (7) in \cite{KL}).
We have included a direct proof of the case that we need for completeness.

\begin{prop} \label{central}
Let $q$ be natural number. 
\begin{enumerate}[(a)] 
\item If $q$ is even then
\begin{equation} \label{suma central}
\sum_{t=1}^q 4^t \, c_{q-t} \, K_{2t}^{2q}(q) =0.
\end{equation}

\item If $q$ is odd then 
\begin{equation} \label{suma central odd}
c_{2q} = - \sum_{t=1}^q 2^{2t-1} \, c_{q-t} \, K_{2t}^{2q}(q).
\end{equation}
\end{enumerate}
\end{prop}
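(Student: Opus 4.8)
The plan is to obtain both identities at once, as the two parity cases of a single relation coming from expanding a \emph{diagonal} Krawtchouk value through the reduction formula of Theorem~\ref{teo1} and then evaluating that diagonal value with Lemma~\ref{k2ppp}. The key observation is that the midpoint values $K_{2t}^{2q}(q)$ occurring in \eqref{suma central} and \eqref{suma central odd} are exactly the terms produced when one applies \eqref{Kraw1} to $K_{2q}^{4q}(2q)$, and that central binomial coefficients appear automatically from the binomial factor of \eqref{Kraw1} at the right specialization.

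Concretely, I would apply \eqref{Kraw1} with $m=2q$, $j=q$ and $p=2q$, so that $2m=4q$ and $2j=2q$. Since $p=2q$ is even, the summation index runs over even values $\ell=2t$ with $0\le t\le q$, the power of two becomes $2^{\ell}=4^{t}$, and the binomial factor collapses into a central binomial coefficient,
\[
\binom{m-\ell}{\tfrac{p-\ell}{2}}=\binom{2q-2t}{q-t}=\binom{2(q-t)}{q-t}=c_{q-t}.
\]
This yields
\[
K_{2q}^{4q}(2q)=\sum_{t=0}^{q}4^{t}\,c_{q-t}\,K_{2t}^{2q}(q),
\]
while Lemma~\ref{k2ppp} evaluates the left-hand side as $K_{2q}^{4q}(2q)=(-1)^{q}\binom{2q}{q}=(-1)^{q}c_{q}$. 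Equating the two gives the master identity $\sum_{t=0}^{q}4^{t}c_{q-t}K_{2t}^{2q}(q)=(-1)^{q}c_{q}$.

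To finish, I would separate the $t=0$ term, which equals $c_{q}$ because $K_{0}^{2q}(q)=1$, obtaining $\sum_{t=1}^{q}4^{t}c_{q-t}K_{2t}^{2q}(q)=\bigl((-1)^{q}-1\bigr)c_{q}$, and then split on the parity of $q$. When $q$ is even the right-hand side vanishes, which is precisely \eqref{suma central}; when $q$ is odd it equals $-2c_{q}$, and writing $4^{t}=2\cdot 2^{2t-1}$, dividing by $2$ and changing sign recovers the recursion \eqref{suma central odd}. I do not anticipate a genuine obstacle: once Lemma~\ref{k2ppp} and Theorem~\ref{teo1} are in hand, the argument is a single well-chosen specialization. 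The only points demanding care are the bookkeeping that forces the choice $m=2q$, $j=q$, $p=2q$ so that the binomials become central, the clean extraction of the $t=0$ term, and correctly tracking the sign $(-1)^{q}$ through the parity split.
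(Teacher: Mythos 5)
Your proof is correct and takes essentially the same route as the paper's: the paper likewise applies Theorem~\ref{teo1} with $m=p=2q$, $j=q$, identifies $\binom{2q-2t}{q-t}=c_{q-t}$, evaluates $K_{2q}^{4q}(2q)=(-1)^q\binom{2q}{q}$ via Lemma~\ref{k2ppp}, isolates the $t=0$ term, and splits on the parity of $q$. One small point worth noting: your derivation (exactly like the paper's) produces $c_q$ on the left-hand side of the odd case, not $c_{2q}$ as printed in \eqref{suma central odd} --- the paper's own example with $q=3$ (where the left side is $\binom{6}{3}=20=c_3$) and the Catalan reformulation \eqref{Cq Kt} confirm that the statement's $c_{2q}$ is a typo for $c_q$, so your proof establishes the intended identity.
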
 

\begin{proof}
By applying Theorem \ref{teo1} with $m=2q=p$, $j=q$, we get
$$K_{2q}^{4q}(2q) = \sum_{\substack{ 0\le \ell \le 2q \\ \ell \text{ even}}}  
 2^\ell \, \tbinom{2q-\ell}{\f{2q-\ell}{2}} \, K_\ell^{2q}(q) .$$
Also, by the previous Lemma we have $K_{2q}^{4q}(2q)=(-1)^q \tbinom{2q}{q}$. 
Thus, by equating these values we obtain
$$(-1)^q \tbinom{2q}{q} = \tbinom{2q}{q} + \sum_{\substack{ 2 \le \ell \le 2q \\ \ell \text{ even}}}  
 2^\ell \, \tbinom{2q-\ell}{\f{2q-\ell}{2}} \, K_\ell^{2q}(q),$$
that is to say
$$ \tbinom{2q}{q} \big( (-1)^q-1 \big) = \sum_{t=1}^q  4^t \, \tbinom{2(q-t)}{q-t} \, K_{2t}^{2q}(q).$$
It is clear from this identity that we get the expressions in the statement, taking $q$ even or odd respectively. 
\end{proof}

\begin{ejem}
If $q=3$, the sum in \eqref{suma central odd} equals
$$\tbinom 63= -\big( 2 \tbinom 42 K_2^6(3) + 2^3 \tbinom 21 K_4^6(3) + 2^5 \tbinom 00 K_6^6(3) \big) = -(12(-3) + 16 \cdot 3 - 32) = 20.$$
For $q=4$, the sum in \eqref{suma central} is $4\tbinom 63 K_2^8(4) + 4^2 \tbinom 42 K_4^8(4) + 4^3 \tbinom 21 K_6^8(4) + 4^4 \tbinom 00 K_8^8(4)$
which equals $80(-4) + 96 \cdot 6 + 128(-4) +256=0$, as it should be. 
\end{ejem}

\section{Catalan numbers}
For $n\ge 0$, the Catalan numbers 
$$C_n = \tfrac{(2n)!}{n!(n+1)!} = \tfrac{(2n)!}{(n+1)(n!)^2},$$ 
which appear in several different counting problems,
are closely related with central binomial coefficients because of the relation 
\begin{equation} \label{Cn}
C_n = \tf{1}{n+1} \tbinom{2n}{n} = \tbinom{2n}{n}-\tbinom{2n}{n+1}. 
\end{equation}

The first seventeen Catalan numbers are (see A000108 in \cite{OEIS})
\begin{gather*}
C_0=1, \: C_1=1, \: C_2=2, \: C_3=5, \: C_4=14, \: C_5=42, \: C_6=132, \\ 
C_7=429, \: C_8=1{.}430, \: C_9=4{.}862, \: C_{10}=16{.}796, \: C_{11}=58{.}786,   \: C_{12}=208{.}012, \\ 
C_{13}=742{.}900, \: C_{14}=2{.}674{.}440, \: C_{15}=9{.}694{.}845,  \: C_{16}=35{.}357{.}670. 
\end{gather*}
Note that $C_3$, $C_7$ and $C_{15}$ are odd. It is a classic result that $C_n$ is odd if and only if $n$ is a Mersenne number, i.e.\@ $n=M_a=2^a-1$ for some $a\ge 0$ (see for instance \cite{AK}).

\sk
Note that by \eqref{Cn} we have  $c_n=(n+1)C_n$, hence all the expressions obtained for central binomial coefficients in the previous section give rise to similar expressions for Catalan numbers.
For instance, by \eqref{centralsum} we have
\begin{equation} \label{Cn exp}
C_m = \tfrac{1}{m+1} \sum_{\substack{0\le \ell \le m \\ \ell \equiv m \, (2)}} 
2^\ell \tbinom{m-\ell}{\frac{m-\ell}{2}} \tbinom{m}{\ell} = 
\tfrac{m!}{m+1} \sum_{\substack{0\le \ell \le m \\ \ell \equiv m \, (2)}} 
\tfrac{2^\ell}{\ell! \{(\frac{m-\ell}{2})!\}^2}.
\end{equation}
By using \eqref{centralsumcases} one gets similar expressions for $C_{2q}$ or $C_{2q+1}$.

\subsubsection*{Recursions} 
It is known that Catalan numbers satisfy the recursions 
$$C_{n+1}= \tfrac{2(2n+1)}{n+2} C_n \qquad \text{ and } \qquad C_{n+1} = \sum_{k=0}^n C_k C_{n-k},$$
for any $n\ge 0$, expressing $C_{n+1}$ in terms of all the previous numbers $C_0,C_1,\ldots,C_n$. 
By using recursions between binomial coefficients already obtained, it is possible to give other recursion formulas for Catalan numbers, in which $C_{2n}$ and $C_{2n+1}$ are linear combinations of $C_0,C_1,\ldots,C_n$ only.

\begin{prop} \label{prop catalan}
For any non negative integer $n$ we have
\begin{equation} \label{catalan}
\begin{split}
C_{2n}   & = \tf{1}{2n+1} \sum_{k=0}^n \, 4^k \, (n-k+1) \, \tbinom{2n}{2k} \, C_{n-k}, \\
C_{2n+1} & = \tf{1}{n+1}  \sum_{k=0}^n \, 4^k \, (n-k+1) \, \tbinom{2n+1}{2k+1} \, C_{n-k}.
\end{split}
\end{equation}
\end{prop}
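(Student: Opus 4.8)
The plan is to derive both recursions directly from the central binomial coefficient identities \eqref{centralitos} already established, using only the elementary relation $c_m = (m+1)C_m$ coming from \eqref{Cn}. The whole argument is a substitution followed by a cancellation of one scalar factor on each side, so I expect no genuine obstacle; the only thing to watch carefully is the index bookkeeping, namely that the summand $c_{q-j}$ must be rewritten as $(q-j+1)C_{q-j}$ and the left-hand side $c_{2q}$ (resp.\ $c_{2q+1}$) as $(2q+1)C_{2q}$ (resp.\ $(2q+2)C_{2q+1}$).

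For the even case, I would start from the first identity in \eqref{centralitos}, written with $q$ replaced by $n$:
$$c_{2n} = \sum_{k=0}^n 4^k \tbinom{2n}{2k} c_{n-k}.$$
Substituting $c_{2n} = (2n+1)C_{2n}$ on the left and $c_{n-k} = (n-k+1)C_{n-k}$ inside the sum on the right yields
$$(2n+1) C_{2n} = \sum_{k=0}^n 4^k (n-k+1) \tbinom{2n}{2k} C_{n-k},$$
and dividing through by $2n+1$ gives the first formula in \eqref{catalan}.

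For the odd case, I would proceed identically, starting from the second identity in \eqref{centralitos}:
$$c_{2n+1} = 2\sum_{k=0}^n 4^k \tbinom{2n+1}{2k+1} c_{n-k}.$$
Here $c_{2n+1} = (2n+2)C_{2n+1} = 2(n+1)C_{2n+1}$ and again $c_{n-k} = (n-k+1)C_{n-k}$, so after substituting we obtain
$$2(n+1) C_{2n+1} = 2\sum_{k=0}^n 4^k (n-k+1) \tbinom{2n+1}{2k+1} C_{n-k}.$$
The factor $2$ cancels on both sides, and dividing by $n+1$ produces the second formula in \eqref{catalan}. Thus both recursions follow immediately, and the key point worth emphasizing is simply that \eqref{Cn} converts the purely multiplicative weight $(m+1)$ between $c_m$ and $C_m$ into the linear factor $(n-k+1)$ appearing inside the sums.
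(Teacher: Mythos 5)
Your proof is correct and follows essentially the same route as the paper: apply the recursions \eqref{centralitos} with $m=2n$ and $m=2n+1$, then convert via $c_m=(m+1)C_m$ from \eqref{Cn} on both sides and cancel the scalar factors. The index bookkeeping ($c_{2n+1}=2(n+1)C_{2n+1}$ and $c_{n-k}=(n-k+1)C_{n-k}$) is handled correctly, so nothing is missing.
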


\begin{proof}
The result follows directly from the relation $C_m = \tf{1}{m+1} \binom{2m}{m}$ by applying \eqref{centralitos} with $m=2n$ and $m=2n+1$, and then using the first equality in \eqref{Cn} again. 
\end{proof}

Also from \eqref{c2q alt}, by using \eqref{Cn}, we get the alternative recursive expressions 
\begin{equation} \label{C2q rec alt}
\begin{split}
C_{2n}   & = \tfrac{4n-1}{(2n+1) 2n^2} \sum_{k=1}^n 4^k k (n-k+1) \tbinom{2n}{2k} C_{n-k}, \\
C_{2n+1} & = \tfrac{4n+1}{(n+1)(2n+1)^2} \sum_{k=0}^n 4^k (2k+1) (n-k+1) \tbinom{2n+1}{2k+1} C_{n-k}.
\end{split}
\end{equation}
Similarly, two more recursive expressions for Catalan numbers can be obtained from \eqref{cq recs} by using \eqref{Cn}.

\sk 
Note that 
\eqref{catalan} and \eqref{C2q rec alt} are very similar to Touchard's identity
\begin{equation}    \label{touchard}
C_{n+1} = \sum_{k=0}^{[n/2]} 2^{n-2k} \, \tbinom{n}{2k} \, C_k  \qquad (n\ge 0) 
\end{equation}
(see for instance \cite{Shap} and the references therein) which also enables one to recursively obtain $C_{2n}$ from $C_0,\ldots,C_{n-1}$ and $C_{2n+1}$ from $C_0,\ldots,C_n$. 
Another Touchard-type identities are 
\begin{equation}    \label{callan}
C_n = \tfrac{n+2}{n(n-1)} \sum_{k=1}^{\lfloor n/2 \rfloor}  2^{n-2k} \, k \, \tbinom{n}{2k} \, C_k, 
\end{equation}
for $n\ge 2$, proved by Callan (\cite{Call}) and the very similar ones 
\begin{equation}    \label{hurtado}
C_{n+1} = (n+3)  \sum_{k=0}^{\lfloor \frac{n-1}{2} \rfloor}  \tfrac{1}{k+2} \, 2^{n-2k} \, \tbinom{n-1}{2k} \, C_k,  
\end{equation}
for $n\ge 0$, due to Hurtado-Noy (\cite{HN}), and
\begin{equation}    \label{amde}
C_n = \tfrac{n+3}{2n}     \sum_{k=0}^{\lfloor \frac{n-1}{2} \rfloor}  \tfrac{2k+1}{k+2} \, 2^{n-2k} \, \tbinom{n}{2k+1} \, C_k, 
\end{equation}
for $n\ge 0$, obtained by Amdeberhan (according to \cite{Call}).

\begin{ejem} 
We will compute $C_8$ in four ways. If $n=4$, by \eqref{catalan} we have  
\begin{eqnarray*}
C_8 &=& \tf 19 \sum_{k=0}^4 4^k (5-k) \tbinom{8}{2k} C_{n-k} \\ &=& 
\tf 19 \big\{ 5 \tbinom 80 
C_4 + 16 \tbinom 82 C_3 + 4^2 3 \tbinom 84 C_2 + 4^3 2\tbinom 86 C_1 + 4^4 \tbinom 88 
C_0 \big\} 
\end{eqnarray*}
 and by \eqref{C2q rec alt} we have
$$C_8 = \tfrac{5}{96} \sum_{k=1}^4  4^{k} k (5-k) \tbinom{8}{2k} C_{4-k} = 
\tfrac{5}{96} \big\{ 16 \tbinom 82 C_3 + 4^2 6 \tbinom 84 C_2 + 4^3 6 \tbinom 86 C_2 + 4^5 \tbinom 88 C_1 \big\}.$$

Alternatively, using \eqref{touchard} we have $n=7$ and
$$C_8 = \sum_{k=0}^3 2^{7-2k} \tbinom{7}{2k} C_k = 
2^7 \tbinom 70 C_0 + 2^5 \tbinom 72 C_1 + 2^3 \tbinom 74 C_2 + 2^1 \tbinom 76 C_3,$$ 
and also, using \eqref{callan}, we get 
$$C_8 = \tfrac{5}{28} \sum_{k=1}^4  2^{8-2k} k \tbinom{8}{2k} C_k = 
\tfrac{5}{28} \big\{ 2^6 \tbinom 82 C_1 + 2^4 \tbinom 84 2C_2 + 2^2 \tbinom 86 3 C_3 + 2^0 \tbinom 88 4 C_4 \big\}.$$

It is reassuring that, since $C_0=C_1=1, C_2=2, C_3=5$ and $C_4=14$, in all the cases we get the value $C_8=1430$. 
One can also use expressions \eqref{hurtado} and \eqref{amde}.
\end{ejem}

\begin{rem}
By \eqref{Cn}, Proposition \ref{central} relates Catalan numbers with BKP's. In fact, for $q$ even we have the 
`orthogonality' relation 
\begin{equation} \label{Cq ortog}
\sum_{t=1}^q 4^t \, (q-t+1) \, C_{q-t} \, K_{2t}^{2q}(q) =0 \qquad \text{($q$ even)},
\end{equation}
and for $q$ odd we have the recursion
\begin{equation} \label{Cq Kt}
C_q = -\tf{1}{q+1} \sum_{t=1}^q 2^{2t-1} \, (q-t+1) \, C_{q-t}  \, K_{2t}^{2q}(q) \qquad \text{($q$ odd)},
\end{equation}
both involving integral values of BKP's of the form $K_2^{2q}(q), K_4^{2q}(q), \ldots, K_{2q}^{2q}(q)$.
\end{rem}

\subsubsection*{Congruences modulo 2, 4, 8 and 16}
Notice that the the recursions \eqref{catalan} --  \eqref{callan} seem well suited to study congruences of Catalan numbers modulo powers of 2 (this is not the case for \eqref{hurtado} and \eqref{amde} since they involve fractions). In fact, by expanding these expressions and reducing modulo $2^r$, for some $1\le r\le n$, one can obtain congruence relations for 
$C_{2n}$ and $C_{2n+1}$ in terms of $C_n$ and $C_{n-1}$ mod~$2^r$. 

The simplest expressions are the ones obtained from Touchard's identity.
By considering the cases $n$ even or odd separately in \eqref{touchard}, we get
\begin{equation*} \label{touchard2}
C_{2n} = \tf 12 \sum_{k=0}^{n-1} 4^{n-k} \, \tbinom{2n-1}{2k} \, C_k,  
\qquad \text{and} \qquad  
C_{2n+1}  = \sum_{k=0}^{n} 4^{n-k} \, \tbinom{2n}{2k} \, C_k, 
\end{equation*}
and by expanding these expressions one can easily deduce that 
\begin{alignat}{4} \label{touch cong}
\nonumber & C_{2n} \equiv 0               && \pmod 2, && \quad \qquad  C_{2n+1}  \equiv C_n      		  && \pmod 2,  \\
          & C_{2n} \equiv 2C_{n-1}        && \pmod 4, && \quad \qquad  C_{2n+1}  \equiv C_n           && \pmod 4,  \\
\nonumber & C_{2n} \equiv 2(2n-1)C_{n-1}  && \pmod 8, && \quad \qquad  C_{2n+1}  \equiv C_n-4nC_{n-1} && \pmod 8,
\end{alignat}
and 
\begin{equation}
\begin{aligned} \label{touch cong2}
C_{2n}   & \equiv  2(2n-1)C_{n-1}  + 8\tbinom{2n-1}{3} C_{n-2}  & \pmod{16},  \\
C_{2n+1} & \equiv  C_n + 4n(2n-1)C_{n-1} 											  & \pmod{16}.
\end{aligned}
\end{equation}

From the congruences mod 2 above, considering $n=2m$ and $n=2m+1$, we get 
$$C_{4m+1}\equiv C_{2m} \equiv 0 \pmod 2 \quad \text{and} \quad 
C_{4m+3} \equiv C_{2m+1} \equiv C_m \pmod 2.$$ 
Taking $m=2k$ and $m=2k+1$ above we get
\begin{gather*}
C_{8k+5}\equiv C_{8k+3}\equiv C_{8k+1} \equiv C_{4k+2} \equiv C_{4k+1} \equiv C_{4k} \equiv C_{2k} \equiv 0 \pmod 2, \\ 
C_{8k+7} \equiv C_{4k+3} \equiv C_{2k+1} \equiv C_k \pmod 2.
\end{gather*}
Iterating this process, for every $k,\ell \ge 1$ one has that 
\begin{equation} \label{cong Ck j}
C_{2^k \ell + j} \equiv_{_2}
\begin{cases} 
0       & \qquad \text{if } 1 \le j < 2^k-1, \sk \\
C_\ell  & \qquad \text{if } j=2^k-1, 
\end{cases}
\end{equation}
where $\equiv_{_2}$ denotes congruence modulo 2.
In particular, if $\ell=1$ in \eqref{cong Ck j}, for every $k\ge 1$, taking $j=2^k-1$ we get 
$$C_{2^{k+1}-1} \equiv C_1 \equiv 1 \pmod 2.$$ 
From this and \eqref{cong Ck j} we recover the fact that $C_n$ is odd if and only if $n$ is a Mersenne number.

\sk 
Now, considering the cases even and odd separately in Callan's identity \eqref{callan} we obtain
\begin{equation*} \label{callan2}
C_{2n} = \tf{n+1}{n(2n-1)} \sum_{k=1}^{n} 4^{n-k} \, k\, \tbinom{2n}{2k} \, C_k 
\quad \text{and} \quad  
C_{2n+1}  = \tf{2n+3}{n(2n+1)} \sum_{k=1}^{n} 4^{n-k} \, k\, \tbinom{2n+1}{2k} \, C_k; 
\end{equation*}
and by expanding these expressions one gets 
\begin{alignat}{3}
\nonumber
nC_{2n} & \equiv 0	& \pmod 2, \\ 
n(2n-1)C_{2n} & \equiv n(n+1)C_n	& \pmod{ 4, 8}, \\ \nonumber
n(2n-1)C_{2n} & \equiv (n+1)\{ 4n(n-1)(2n-1) C_{n-1} +nC_n \}	& \pmod{16}, 
\end{alignat}
\begin{alignat}{3}
\nonumber
nC_{2n+1} & \equiv nC_n  																			& \pmod 2, \\
n(2n+1)C_{2n+1} & \equiv 3nC_n 																& \pmod 4, \\ \nonumber
n(2n+1)C_{2n+1} & \equiv 4(n-1)\tbinom{2n+1}{3} C_{n-1} - (4n^2+3)n C_n & \pmod{8, 16},
\end{alignat}
or equivalently $nC_{2n+1} \equiv n(2n+3)C_n$ mod 4.

\sk
On the other hand, by expanding \eqref{catalan} in Proposition \ref{prop catalan}, it follows directly that 
\begin{alignat}{3} \label{6.14}
\nonumber
C_{2n} & \equiv (n+1) C_n 															& \pmod 2, \\
(2n+1) C_{2n} & \equiv (n+1) C_n 												& \pmod 4, \\ \nonumber
(2n+1) C_{2n} & \equiv (n+1) C_n - 4n^2 C_{n-1}  & \pmod{8}, \\ \nonumber
(2n+1) C_{2n} & \equiv (n+1) C_n + 4n^2 (2n-1) C_{n-1}  & \pmod{16}, 
\end{alignat}
\begin{alignat}{3} \label{6.15}
\nonumber
(n+1) C_{2n+1} & \equiv (n+1) C_n  																			& \pmod 2, \\
(n+1) C_{2n+1} & \equiv (n+1) (2n+1) C_n 																& \pmod 4, \\ \nonumber
(n+1) C_{2n+1} & \equiv (n+1) (2n+1) C_n + 4n \tbinom{2n+1}{3} C_{n-1}  & \pmod{8, 16}. 
\end{alignat}

Congruences modulo $32$ and $64$, or even higher powers of $2$, can also be obtained in the same way, although with a fast increasing complexity.

\begin{rem} 
The determination of the Catalan numbers mod 4 (resp.\@ 8) is given in Theorems 2.3 (resp.\@ 4.2) in \cite{ELY} by using 
\textsl{ad hoc} methods. In the mod 4 case, if we put 
$\mathcal{C}_4(i) = \{C_n : C_n\equiv i \text{ (mod } 4)\}$ for $0\le i\le 3$ 
and $N_{a,b} = 2^a +2^b-1 = 2^a+M_b$ 
then 
$\mathcal{C}_4(0) = \{C_n \,:\, n \ne N_{a,b}, a>b\ge 0 \}$, $\mathcal{C}_4(1) = \{C_n \,:\, n=M_a, a \ge 0 \}$  
$\mathcal{C}_4(2) = \{C_n \,:\, n = N_{a,b}, \, a>b\ge 0 \}$ and $\mathcal{C}_4(3) = \varnothing$.
Similar results hold for the mod 8 case.
By using \eqref{touchard}, shorter and easier proofs of these facts can be found in \cite{XX}, where also a systematic approach to Catalan numbers 
modulo $2^r$ is carried out.
\end{rem}

\section*{final remarks}
(a) The method used to obtain Theorem \ref{teo1} does not seem to apply for elements $x\in T_{2m}$ of order $>2$ because the $p$-traces 
$\chi_p(x)$ are not expressible, a priori, in terms of (binary) Krawtchouk polynomial, as in \eqref{pr1}. 

\sk 
(b) Is there any combinatorial proof for (or explanation to) each of the expressions 
\eqref{Cn exp} -- \eqref{C2q rec alt} obtained for Catalan numbers?

\sk 
(c) The expressions for BKP's obtained so far 
seem well suited to study the values $K_{p}^{2^rm}(2^sj)$ modulo high powers of $2$. 

\sk 
(d) The techniques and results in this paper could be of some utility in studying recursions and congruences for the Motzkin numbers
$M_n$ because of the relations 
$M_n = \sum_{k=0}^\ell \tbinom{n}{2k} C_k$ with $\ell= {\lfloor n/2 \rfloor}$ and $C_{n+1} = \sum_{k=0}^n \tbinom nk M_k$.

\end{document}